\newtheorem{theorem}{Theorem}
\newtheorem{lemma}{Lemma}
\newtheorem{prop}{Proposition}
\numberwithin{equation}{section}
\newcommand{\R}{\mathbb{R}}
\newcommand{\N}{\mathbb{N}}
\newcommand{\D}{\mathsf{D}}
\newcommand*{\x}{\mathrel{\scalebox{0.6}{$\times$}}}
\newcommand{\Sym}{\R^{d\x d}_{\rm sym}}
\newcommand{\PD}{\R^{d\x d}_{+}}
\newcommand{\dd}[1]{\,\mathrm{d}{#1}}
\newcommand{\Wi}{W^{1,\infty}_{\rm loc}}
\newcommand{\Win}{W^{1,\infty}_{\rm loc}(V;\PD)}
\begin{document}

\title[Gradient coercivity of SPD fields]{Sharp nonlinear estimates for multiplying derivatives of positive definite tensor fields}
\author[M. Bathory]{Michal Bathory}
\address{Michal Bathory, University of Vienna, Faculty of Mathematics, Oskar-Morgernstern-Platz~1, 1090 Wien, Austria}
\email{michal.bathory@univie.ac.at}
%\date{25.08.2020}
\keywords{non-linear gradient estimate; symmetric positive definite; tensor field; logarithmic convexity; matrix calculus}
\subjclass{35A23, 15A69, 15A16, 76A10}
\thanks{Submitted to Mathematical Inequalities \& Applications, \url{http://mia.ele-math.com/}}
\hypersetup{pdftitle=Optimal inequalities in multiplication of derivatives of positive definite matrices and their powers,
	pdfauthor=Michal Bathory,
	hidelinks,
	pdfstartview=FitH
}

\begin{abstract}
  The simple product formulae for derivatives of scalar functions raised to different powers are generalized for functions which take values in the set of symmetric positive definite matrices. These formulae are fundamental in derivation of various non-linear estimates, especially in the PDE theory. To get around the non-commutativity of the matrix and its derivative, we apply some well-known integral representation formulas and then we make an observation that the derivative of a matrix power is a logarithmically convex function with respect to the exponent. This is directly related to the validity of a seemingly simple inequality combining the integral averages and the inner product on matrices. The optimality of our results is illustrated on numerous examples.
\end{abstract}

\maketitle

\section{Introduction and main results}

Let $V\subset\R^n$, $n\in\N$, be an open set and let $\D_i$, $i=1,\ldots,n$, denote the partial derivatives. As a consequence of the differentiation rules for the real power function $x\mapsto x^{\alpha}$, $\alpha\neq-1$, the identities
\begin{equation}\label{gen}
\D_i u\,u^{\alpha}=\frac1{\alpha+1}\D_i u^{\alpha+1}\quad\text{and}\quad
\sum_{i=1}^n\D_i u\,\D_i u^{\alpha}=\frac{4\alpha}{(\alpha+1)^2}\sum_{i=1}^n\big|\D_i u^{\frac{\alpha+1}2}\big|^2
\end{equation}
hold true almost everywhere in $V$ for any positive and locally Lipschitz continuous function $u\colon V\to\R_+$ (denoted by $u\in\Wi(V;\R_+)$, see Section~\ref{SecNot} for details). These identities are frequently used in the theory of nonlinear partial differential equations (PDE) to find information about the unknown function. Our goal is to prove \eqref{gen} when the scalar $u$ is replaced by $A\in\Win$, where $\PD$ denotes the set of symmetric positive definite matrices of the size $d\times d$, $d\in\N$. Such a situation occurs in numerous physical applications, see Section~\ref{SecMot} for more details. It turns out that while $\eqref{gen}_1$ continues to hold, the identity $\eqref{gen}_2$ fails due to non-commutativity of $A$ and $\D_i A$. Nevertheless, we show that $\eqref{gen}_2$ can still be recovered as an inequality in the preferable direction. Since our result is more general, let us first define, for any $A\in\Win$ and $\lambda\in\R$, the non-linear differential operator $\D^{\lambda}=(\D^{\lambda}_i)_{i=1}^n$, by
\begin{equation}\label{def}
\D_i^{\lambda}A\coloneqq\Big\{\,\begin{matrix}
\lambda^{-1}\D_i A^{\lambda}&\text{if}\quad\lambda\neq0\\[0.1cm]
\D_i\log A&\text{if}\quad\lambda=0
\end{matrix}\,\Big\}=\int_0^1A^{\lambda(1-s)}(\D_i\log A)A^{\lambda s}\dd{s},
\end{equation}
where we used the matrix power and matrix logarithm functions (see Section~\ref{SecNot}) and the last equality follows from standard results, see Lemma~\ref{Lrepr} below. The case $\lambda=1$ recovers the usual partial derivative $\D_i A=\D_i^1A$. We also denote the Euclidian inner product and norm on the spaces $\R^{m_1\x \ldots\x m_k}$, $k\in\N$, of rank-$k$ tensors by
\begin{equation}\label{Frob}
\langle X,Y\rangle_{m_1\x \ldots\x m_k}\!\!\coloneqq\!\!\sum_{i_1=1}^{m_1}\!\!\ldots\!\!\sum_{i_k=1}^{m_k}X_{i_1\ldots i_k}Y_{i_1\ldots i_k},\; |X|_{m_1\x \ldots\x m_k}\!\!\coloneqq\!\!\sqrt{\langle X,X\rangle_{m_1\x \ldots\x m_k}}
\end{equation}
for any $X,Y\in\R^{m_1\x \ldots\x m_k}$. Then, our generalization of \eqref{gen} takes the following form.

\begin{theorem}\label{Todh}
	Let $p,q\geq0$, $\alpha,\beta,\gamma,\delta\in\R$ and let $A\in\Win$. Then
	\begin{equation}\label{gengen}
	\langle\D^{\alpha}A,A^{\beta}\rangle_{d\x d}=\langle\D^{\alpha+\beta}A,I\rangle_{d\x d},
	\end{equation}
	where $I$ is the identity matrix, and
	\begin{equation}\label{ide0A}
	\langle\D^{\alpha}A,\D^{\beta}A\rangle_{n\x d\x d}^p\langle\D^{\gamma}A,\D^{\delta}A\rangle_{n\x d\x d}^q\geq\Big|\D^\frac{(\alpha+\beta) p+(\gamma +\delta) q}{2p+2q}A\Big|_{n\x d\x d}^{2p+2q}
	\end{equation}
	almost everywhere in $V$.
\end{theorem}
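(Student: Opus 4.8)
The plan is to prove the two assertions by quite different means: the identity \eqref{gengen} is a short trace computation, whereas the inequality \eqref{ide0A} rests on a logarithmic‑convexity property of the derivative of a matrix power with respect to its exponent. For \eqref{gengen} it suffices to argue componentwise in $i$. Since $A$ is $\PD$‑valued, both $A^\beta$ and $\D_i^{\la}A$ are symmetric a.e.\ (for $\D_i^{\la}A$ this follows by the substitution $s\mapsto 1-s$ in the integral in \eqref{def}), so $\langle\D_i^\alpha A,A^\beta\rangle_{d\x d}=\operatorname{tr}\!\big((\D_i^\alpha A)\,A^\beta\big)$; inserting \eqref{def}, pulling $A^\beta$ under the integral, and using cyclicity of the trace together with commutativity of powers of $A$, the integrand collapses to $\operatorname{tr}\!\big((\D_i\log A)\,A^{\alpha+\beta}\big)$ for every $s$, and the $s$‑integral then gives exactly $\operatorname{tr}(\D_i^{\alpha+\beta}A)=\langle\D_i^{\alpha+\beta}A,I\rangle_{d\x d}$.

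For \eqref{ide0A} I would fix a point of $V$ of differentiability of $A$, write $B_i\dq\D_i\log A\in\Sym$, and consider the function $\Phi\colon\R^2\to[0,\infty)$,
\[
\Phi(\sigma,\tau)\dq\langle\D^\sigma A,\D^\tau A\rangle_{n\x d\x d}=\sum_{i=1}^n\langle\D_i^\sigma A,\D_i^\tau A\rangle_{d\x d},
\]
which is symmetric, $\Phi(\sigma,\tau)=\Phi(\tau,\sigma)$, and satisfies $\Phi(\sigma,\sigma)=|\D^\sigma A|_{n\x d\x d}^2$. The key claim — the observation flagged in the abstract — is that $\Phi$ is logarithmically convex on $\R^2$. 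To establish it I would substitute \eqref{def} into each summand, use $\langle X,Y\rangle=\operatorname{tr}(X^\top Y)$, symmetry of $B_i$ and cyclicity to obtain
\[
\langle\D_i^\sigma A,\D_i^\tau A\rangle_{d\x d}=\int_0^1\!\!\int_0^1\operatorname{tr}\!\big(A^{\sigma s+\tau t}B_i\,A^{(\sigma+\tau)-(\sigma s+\tau t)}B_i\big)\dd s\dd t,
\]
and then diagonalize $A$: if $\{e_k\}$ is an orthonormal eigenbasis of $A$ with eigenvalues $\nu_k>0$, the trace equals $\sum_{k,l}\nu_k^{\sigma s+\tau t}\,\nu_l^{(\sigma+\tau)-(\sigma s+\tau t)}\,\langle B_ie_k,e_l\rangle^2$, and for each fixed $s,t$ the factor $\nu_k^{\sigma s+\tau t}\nu_l^{(\sigma+\tau)-(\sigma s+\tau t)}$ is the exponential of a linear function of $(\sigma,\tau)$. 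Thus $\Phi$ is a nonnegative combination (with coefficients $\langle B_ie_k,e_l\rangle^2\ge0$) of averages over $(s,t)\in[0,1]^2$ of log‑linear functions of $(\sigma,\tau)$; since both taking nonnegative combinations and averaging preserve logarithmic convexity — each by an application of Hölder's inequality — $\Phi$ is log‑convex. (If all $B_i$ vanish then $\Phi\equiv 0$ and both sides of \eqref{ide0A} vanish as well; otherwise $\Phi>0$ everywhere and $\log\Phi$ is a genuine convex function. The one‑variable heart of this is the elementary fact that $z\mapsto\tf{\sinh z}{z}$ is log‑convex.)

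Granting the log‑convexity of $\Phi$, the inequality \eqref{ide0A} follows. Assume $p+q>0$ (the case $p=q=0$ being trivial) and put $\theta\dq\tf{p}{p+q}$, $a\dq\tf{\alpha+\beta}{2}$, $b\dq\tf{\gamma+\delta}{2}$, so that the exponent appearing in \eqref{ide0A} is $\mu=\theta a+(1-\theta)b$ and $(\mu,\mu)=\theta(a,a)+(1-\theta)(b,b)$. First, convexity of $\log\Phi$ on the segment from $(\alpha,\beta)$ to $(\beta,\alpha)$, combined with symmetry, gives $\log\Phi(a,a)\le\tf12\log\Phi(\alpha,\beta)+\tf12\log\Phi(\beta,\alpha)=\log\Phi(\alpha,\beta)$, and similarly $\log\Phi(b,b)\le\log\Phi(\gamma,\delta)$; next, convexity at the convex combination $(\mu,\mu)$ yields
\[
\log\Phi(\mu,\mu)\le\theta\log\Phi(a,a)+(1-\theta)\log\Phi(b,b)\le\theta\log\Phi(\alpha,\beta)+(1-\theta)\log\Phi(\gamma,\delta).
\]
Exponentiating, raising to the power $p+q$, and using $\Phi(\mu,\mu)=|\D^\mu A|_{n\x d\x d}^2$ gives $|\D^\mu A|_{n\x d\x d}^{2p+2q}\le\langle\D^\alpha A,\D^\beta A\rangle_{n\x d\x d}^{p}\langle\D^\gamma A,\D^\delta A\rangle_{n\x d\x d}^{q}$, i.e.\ \eqref{ide0A}.

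The main obstacle is precisely the log‑convexity of $\Phi$: once one recognizes, via the integral representation \eqref{def} and the spectral decomposition of $A$, that $\langle\D^\sigma A,\D^\tau A\rangle_{n\x d\x d}$ is an \emph{average of exponentials linear in the exponent pair} $(\sigma,\tau)$, everything else is Hölder's inequality and routine manipulation of convex functions — and it is exactly this structural fact that repairs the breakdown, caused by the non‑commutativity of $A$ and $\D_iA$, of the naive scalar argument behind \eqref{gen}.
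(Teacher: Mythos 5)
Your proof is correct, and for the inequality \eqref{ide0A} it takes a genuinely different route from the paper's. The paper first establishes a cornerstone lemma (Theorem~\ref{Lemf}): for $B\in\PD$ and an \emph{arbitrary} $X\in\R^{d\x d}$, the even function $f(x)=\langle P(x),P(-x)\rangle_{d\x d}$ built from $P(x)=\int_0^1 B^{(1+x)s}XB^{-(1+x)s}\dd{s}$ is convex, which is proved by a coordinate-free computation showing $f''(x)=\int_0^1\!\int_0^1\big|(s-t)B^{\f12 x_{st}}(LX-XL)B^{-\f12 x_{st}}\big|^2_{d\x d}\dd{s}\dd{t}\ge 0$ with $L=\log B$; Theorem~\ref{TT} then follows by a suitable (non-symmetric) choice of $X$, and \eqref{ide0A} is obtained by applying \eqref{cox} twice and invoking the separate one-variable log-convexity of $\la\mapsto|\D^\la A|_{n\x d\x d}$ via Lemma~\ref{Llogcon}. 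You instead diagonalize $A$ at the given point, use the double integral representation and cyclicity of the trace to exhibit $\Phi(\sigma,\tau)=\langle\D^{\sigma}A,\D^{\tau}A\rangle_{n\x d\x d}$ as a nonnegative mixture of exponentials of affine functions of $(\sigma,\tau)$, and conclude that $\Phi$ is log-convex on $\R^2$ by Hölder. This single bivariate statement subsumes the paper's Theorem~\ref{TT} (take the midpoint of $(\alpha,\beta)$ and $(\beta,\alpha)$ and use symmetry of $\Phi$), the univariate log-convexity along the diagonal, and the iteration remark after the paper's proof, all in one stroke. The trade-off: the paper's route is coordinate-free and isolates the inequality \eqref{Odh} for an arbitrary matrix $X$ as a result of independent interest, while yours is shorter and more unified, reducing the whole matter to the spectral decomposition and Hölder's inequality for log-affine mixtures. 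For the identity \eqref{gengen}, your trace/cyclicity computation is essentially the same as the paper's (which uses \eqref{I2} in place of explicit traces).
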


In Section~\ref{SubIne}, we show that \eqref{ide0A} \textit{can not hold with the equality sign} in general. Also, we would like to point out that \textit{the matrix symmetry assumption is important} and that \eqref{ide0A} can not hold (in general) for non-symmetric positive definite valued functions, see Section~\ref{SubSym}. From the analytic point of view, the direction of inequality \eqref{ide0A} is the preferred one as the right hand side is non-negative and has a simple structure. Nevertheless, to provide a~more complete picture, we investigate also the reverse inequality to \eqref{ide0A} in Section~\ref{SecRev}. Using \eqref{def}, we will show that \eqref{ide0A} is rather a simple consequence of the following theorem, which is thus our key result.

\begin{theorem}\label{TT}
	Let $A\in\Win$. Then, the function
	\begin{equation}\label{func}
	\lambda\mapsto\big|\D^{\lambda}A\big|_{n\x d\x d},\quad\lambda\in\R,
	\end{equation}
	is logarithmically convex in the following {\rm(}strengthened\,{\rm)} sense:
	
	For every $\alpha,\beta\in\R$, there holds
	\begin{equation}\label{cox}
	\langle\D^{\alpha}A,\D^{\beta}A\rangle_{n\x d\x d}\geq\big|\D^{\frac{\alpha+\beta}2}A\big|^2_{n\x d\x d}\quad\text{a.e.\ in }V.
	\end{equation}
\end{theorem}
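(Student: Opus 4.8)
The plan is to reduce everything to a pointwise statement about the fixed vector-valued quantity $\D_i\log A$ and the fixed positive definite matrix $A$, using the integral representation in \eqref{def}. Fix a point of $V$ where $A$ is differentiable and all the involved derivatives exist (a.e.\ point, by Rademacher's theorem and the chain rule for the matrix logarithm), write $B_i\dq\D_i\log A$, and diagonalize $A=Q\,\mathrm{diag}(a_1,\ldots,a_d)\,Q^{\mathsf T}$ with $a_k>0$. In the eigenbasis, \eqref{def} gives $(\D_i^{\la}A)_{k\ell}=(Q^{\mathsf T}B_iQ)_{k\ell}\int_0^1 a_k^{\la(1-s)}a_\ell^{\la s}\dd s=(Q^{\mathsf T}B_iQ)_{k\ell}\,\phi_\la(\log a_k,\log a_\ell)$, where for real $x,y$ I set $\phi_\la(x,y)\dq\int_0^1 e^{\la((1-s)x+sy)}\dd s$, i.e.\ $\phi_\la(x,y)=\frac{e^{\la x}-e^{\la y}}{\la(x-y)}$ for $x\neq y$ and $e^{\la x}$ for $x=y$. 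Hence
\[
\langle\D^{\alpha}A,\D^{\beta}A\rangle_{n\x d\x d}=\sum_{i=1}^n\sum_{k,\ell=1}^d (Q^{\mathsf T}B_iQ)_{k\ell}^2\,\phi_\alpha(x_k,x_\ell)\,\phi_\beta(x_k,x_\ell),\qquad x_k\dq\log a_k,
\]
and the analogous identity with $\frac{\alpha+\beta}2$ on the right of \eqref{cox}. Since the coefficients $(Q^{\mathsf T}B_iQ)_{k\ell}^2$ are nonnegative, \eqref{cox} will follow term by term once I prove the scalar inequality
\[
\phi_\alpha(x,y)\,\phi_\beta(x,y)\;\geq\;\phi_{(\alpha+\beta)/2}(x,y)^2\qquad\text{for all }x,y\in\R.
\]

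So the crux is: for fixed real $x\neq y$, the function $\la\mapsto \phi_\la(x,y)=\frac{e^{\la x}-e^{\la y}}{\la(x-y)}$ is log-convex in $\la$ on $\R$ (the case $x=y$ is trivial since $\phi_\la=e^{\la x}$ is even log-linear). The cleanest way I would argue this is to notice that $\phi_\la(x,y)=\int_0^1 e^{\la((1-s)x+sy)}\dd s$ is an average of the exponentials $\la\mapsto e^{\la t_s}$ with $t_s\dq(1-s)x+sy$; each such exponential is log-linear (hence log-convex) in $\la$, and a nonnegative mixture of log-convex functions is log-convex — this is exactly Hölder's inequality: $\phi_{(\alpha+\beta)/2}(x,y)=\int_0^1 e^{\alpha t_s/2}e^{\beta t_s/2}\dd s\leq\big(\int_0^1 e^{\alpha t_s}\dd s\big)^{1/2}\big(\int_0^1 e^{\beta t_s}\dd s\big)^{1/2}=\phi_\alpha(x,y)^{1/2}\phi_\beta(x,y)^{1/2}$. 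That yields \eqref{cox} immediately. The logarithmic convexity of $\mu$ in the sense of \eqref{func} then follows from \eqref{cox} by the standard argument: given $\la_0,\la_1$ and $\theta\in[0,1]$, apply \eqref{cox} with a Cauchy–Schwarz step, or more directly repeat the Hölder computation above with the same measure $\dd s$ but now also summed against the nonnegative weights $(Q^{\mathsf T}B_iQ)_{k\ell}^2$ over $i,k,\ell$, to get $\mu(\theta\la_0+(1-\theta)\la_1)^2\le\langle\D^{\la_0}A,\D^{\la_1}A\rangle\le\mu(\la_0)\mu(\la_1)$ in the midpoint case and the general case by the same Hölder exponents $1/\theta,1/(1-\theta)$.

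I expect the main obstacle to be purely bookkeeping rather than conceptual: justifying the integral representation \eqref{def} and the diagonalization identity for $\D_i^{\la}A$ at a.e.\ point — in particular that $\D_i\log A$ exists a.e.\ and the representation of the Fréchet derivative of $t\mapsto t^\la$ (resp.\ $\log t$) as the divided-difference (Daleckii–Krein) formula holds in the $W^{1,\infty}_{\rm loc}$ setting — but this is precisely what Lemma~\ref{Lrepr} is there to supply, so I may cite it. A secondary point to handle cleanly is the degenerate limits $\la\to0$ (where $\phi_\la(x,y)\to\frac{x-y}{\cdot}$... more precisely $\phi_0(x,y)=\int_0^1\dd s=1$ after the substitution, matching $\D^0_iA=\D_i\log A$) and coincident eigenvalues $x_k=x_\ell$; both are removable and the inequality is continuous in $(x,y)$, so no separate treatment is really needed. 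One should also remark that strictness in \eqref{cox} fails exactly when, for every pair $(k,\ell)$ with $(Q^{\mathsf T}B_iQ)_{k\ell}\neq0$, one has $\alpha(x_k-x_\ell)=\beta(x_k-x_\ell)$, i.e.\ either $\alpha=\beta$ or the off-diagonal structure of $\D_i\log A$ is supported on equal-eigenvalue blocks — this is the source of the optimality examples announced after the theorem, but it is not needed for the proof of \eqref{cox} itself.
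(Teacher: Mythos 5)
Your proposal is correct, and it is a genuinely different (and in several respects more elementary) route than the one taken in the paper. The paper works in a basis-free way: it changes variables to bring \eqref{cox} into the form $\langle P(x),P(-x)\rangle\geq|P(0)|^2$ of Theorem~\ref{Lemf}, which it then proves by computing the second derivative of $f(x)=\langle P(x),P(-x)\rangle$ and showing $f''\geq 0$, with the case $\alpha+\beta=0$ handled separately by Cauchy--Schwarz. You instead freeze a point, diagonalize $A=Q\,\mathrm{diag}(a_k)\,Q^{\mathsf T}$, and use the orthogonal invariance of the Frobenius inner product together with the (Daleckii--Krein type) divided-difference kernel $\phi_\la(x_k,x_\ell)=\int_0^1 e^{\la((1-s)x_k+sx_\ell)}\dd s$ to decouple the matrix inequality into a sum with nonnegative weights $(Q^{\mathsf T}B_iQ)_{k\ell}^2$ of the scalar inequalities $\phi_\alpha\phi_\beta\geq\phi_{(\alpha+\beta)/2}^2$, each of which is exactly Cauchy--Schwarz on the measure $\dd s$ over $[0,1]$. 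This reduction handles $\alpha+\beta=0$ uniformly with the other cases, makes the equality cases transparent (you identify them correctly), and as a bonus yields a two-line proof of the paper's Theorem~\ref{Lemf}: with $B=Q\,\mathrm{diag}(b_k)\,Q^{\mathsf T}$ one finds $\langle P(x),P(-x)\rangle=\sum_{k,\ell}(Q^{\mathsf T}XQ)_{k\ell}^2\,\psi_x(k,\ell)\psi_{-x}(k,\ell)$ with $\psi_x(k,\ell)=\int_0^1 e^{(1+x)s(\log b_k-\log b_\ell)}\dd s$, and the same Cauchy--Schwarz step gives $\psi_x\psi_{-x}\geq\psi_0^2$ term by term, for arbitrary (not necessarily symmetric) $X$. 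What the paper's approach buys instead is that it never leaves the coordinate-free setting and exposes the algebraic mechanism — convexity of $x\mapsto\langle P(x),P(-x)\rangle$ driven by the commutator $LX-XL$ — which is conceptually interesting in its own right; what yours buys is brevity, a unified treatment of all $(\alpha,\beta)$, and a clean description of when \eqref{cox} is an equality. The bookkeeping you flag (existence of $\D\log A$ a.e.\ and the representation \eqref{def}) is indeed supplied by Lemma~\ref{Lrepr}, so citing it is appropriate, and your observation that the degenerate cases $x_k=x_\ell$ and $\la\to 0$ are removable by continuity is correct.

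One small editorial remark: the final paragraph about recovering ordinary log-convexity of $\mu$ via H\"older with exponents $1/\theta$, $1/(1-\theta)$ goes beyond what Theorem~\ref{TT} asserts (which is exactly the midpoint inequality \eqref{cox}); the paper outsources that implication to Lemma~\ref{Llogcon}, so you could simply drop it or cite the lemma. Also, your sentence ``$\phi_\la(x,y)\to\frac{x-y}{\cdot}$\ldots'' contains a false start before the correct statement $\phi_0(x,y)=1$; in a polished write-up you should delete the garbled fragment.
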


Theorem~\ref{TT} naturally generalizes the scalar case $d=1$, $u\in\Wi(V;\R_+)$, where
\begin{equation*}
\log|\D^{\lambda}u|_n=\lambda\log u+\log|\D\log u|_n,\quad\D u\neq 0,
\end{equation*}
is simply a linear function of $\lambda$. Note that \eqref{cox} takes into account the structure of the Frobenius inner product, unlike the usual definitions of logarithmic convexity which use only the standard multiplication. We remark that \eqref{cox} implies the logarithmic convexity of \eqref{func} in the usual sense, see Lemma~\ref{Llogcon} below.

It turns out that the heart of the matter is the following inequality.

\begin{lemma}\label{Lemf}
	Let $B\in\PD$ and $X\in\R^{d\x d}$, $d\in\N$. Then the function
	\begin{equation}\label{P}
	P(x)\coloneqq\int_0^{1}B^{(1+x)s}XB^{-(1+x)s}\dd{s},\quad x\in\R,
	\end{equation}
	satisfies the inequality
	\begin{equation}\label{Odh}
	\langle P(x),P(-x)\rangle_{d\x d}\geq|P(0)|_{d\x d}^2\quad\text{for all}\quad x\in\R.
	\end{equation}
\end{lemma}

We remark that if $x=\pm1$, then \eqref{Odh} becomes a Jensen inequality for~$|\cdot|_{d\x d}^2$.

In Theorems~\ref{Todh} and \ref{TT} above, the assumption of local Lipschitz continuity is considered for convenience since $\Win$ is a convex cone that is also closed under the operation $A\mapsto A^{\alpha}$, $\alpha\in\R$ (see Lemma~\ref{Lrepr}). At the same time, this setting seems sufficient for many PDE applications. Our results, of course, continue to hold in any subset of $\Win$ (such as $C^k(V;\PD)$, $k\in\N\cup\{\infty\}$, or $W^{1,\infty}(V;\PD)$), but it may no longer be true that $A^{-1}$ belongs to the same set as $A$. On the other hand, in the last Section~\ref{SecLast}, we briefly discuss a~possible relaxation of the assumption $A\in\Win$.

\section{Notation}\label{SecNot}

The set $\R^{d\x d}_{\rm sym}$, $d\in\N$, consists of all symmetric matrices $A\in\R^{d\x d}$, i.e., those which fulfil $A=A^T$, where $A^T$ is the transpose of $A$. Furthermore, the set of all symmetric positive definite matrices $\PD$ consists of all $A\in\Sym$ with the property
\begin{equation}\label{pd}
\langle Av,v\rangle_d>0\quad\text{for all }0\neq v\in\R^d.
\end{equation}
In the special case $d=1$, we abbreviate $\R_+\coloneqq\R^{1\x 1}_+=(0,\infty)$. Seeing the matrix multiplication as a composition of linear operators and the matrix transpose as the operator adjoint, it is not surprising that the identity
\begin{equation}
\langle A_1A_2A_3A_4, I\rangle_{d\x d}\!=\!\langle A_1A_2A_3,A_4^T\rangle_{d\x d}\!=\!\langle A_1A_2,A_4^TA_3^T\rangle_{d\x d}\!=\!\langle A_2,A_1^TA_4^TA_3^T\rangle_{d\x d}\label{I2}
\end{equation}
holds for all $A_1,A_2,A_3,A_4\in\R^{d\x d}$, where $I$ is always the identity matrix. Therefore, since for any $B_1,B_2\in\PD$, we can write $B_1=B^{\frac12}_1B^{\frac12}_1$ and $B_2=B_2^{\frac12}B_2^{\frac12}$ (see below), we obtain, for all $A\in\R^{d\x d}$, that
\begin{equation}
\langle B_1AB_2,A\rangle_{d\x d}=\big|B_1^{\frac12}A B_2^{\frac12}\big|_{d\x d}^2.\label{I1}
\end{equation}

As a~consequence of the~Schur decomposition, every symmetric (and thus normal) matrix $A$ admits a~spectral decomposition of the~form
\begin{equation}\label{rozkl}
A=Q D Q^T,
\end{equation}
where $D$ is a~diagonal matrix containing the~real eigenvalues $\{\lambda_i\}_{i=1}^d$ of $A$ and $Q$ is a~unitary matrix of the corresponding eigenvectors, see \cite[p.~101]{Horn1990} or \cite[p.~17]{Magnus1999}. Then, we can extend the definition of any real function $f\colon\R\to\R$ to symmetric matrix arguments via
\begin{equation}\label{pow}
f(A)=Qf(D)Q^T,\quad A\in \Sym,
\end{equation}
where $f(D)$ is diagonal matrix with entries $f(D_{ii})$, $i=1,\ldots,d$ on its diagonal. If the natural domain of the function $f$ is $\R_+$ (such as for $x^{\alpha}$ or $\log x$), we can still use \eqref{pow} to define $f(A)$ provided that $A\in\PD$. Using definition \eqref{pow}, it is easy to see that all the basic calculus identities remain true also in the matrix case, for example:
\begin{equation}
A^{\alpha}A^{\beta}=A^{\alpha+\beta}=A^{\beta}A^{\alpha},\quad\log A^{\alpha}=\alpha\log A,\quad\exp\log A=A,\qquad\alpha,\beta\in\R.\label{explog}
\end{equation}

The symbol $V$ always denotes an open subset of $\R^n$, $n\in\N$. Let $N\in\N$, $1\leq p\leq \infty$ and let us recall that the Sobolev space $W^{1,p}(V;\R^N)$ is defined as the set of all functions $u\colon V\to\R^N$ whose distributional gradient can be represented by a locally integrable function $\D u$ and the norm
\begin{equation*}
\| u\|_{W^{1,p}(V;\R^N)}\coloneqq\Big\{\begin{matrix}
\left(\int_V(|u|_N^p+|\D u|_{n\x N}^p)\right)^{\frac1p}&\quad\text{if}\quad p<\infty;\\[0.1cm]
\;\underset{V}{\text{ess\,sup}}\,(|u|_N+|\D u|_{n\x N})&\quad\text{if}\quad p=\infty\;
\end{matrix}
\end{equation*}
is finite. The space $W^{1,p}(V;\R^{d\x d})$ is then defined analogously. We refer to \cite{Adams2003} for properties of Sobolev spaces. We define the set $\Win$ as
\begin{equation}\label{def1}
\big\{\,A\colon V\to\PD:\|A\|_{W^{1,\infty}(K;\R^{d\x d})}<\infty\quad\text{for all}\;\; K\;\;\text{open with}\;\;\overline{K}\subset V\,\big\}.
\end{equation}
Although this is not a vector space (it is not closed under subtraction), it has other useful properties (most importantly, it is invariant with respect to the matrix inverse) as is shown in Lemma~\ref{Lrepr} below. It is known that functions from $W^{1,\infty}(\R^n;\R^{d\x d})$ are continuous (up to a null set) and in fact as a consequence of Morrey's inequalities, it is not hard to see that $\Wi(V;\R^{d\x d})$ coincides with the traditional space $C^{0,1}_{\rm loc}(V;\R^{d\x d})$ of locally Lipschitz functions, whose classical derivative exists a.e.\ in $V$. Nevertheless, we stick to the notation $\Wi(V;\R^{d\x d})$ (and $\Win$), since the definition \eqref{def1} is easy to work with in what follows.

\section{PDE motivation and related results}\label{SecMot}

Our motivation to investigate \eqref{ide0A} originates from the study of certain non-linear partial differential equations arising in the theory of viscoelastic fluids. These equations contain a tensorial function as an unknown and they have been used by physicist and engineers for a~long time, see e.g.~\cite{Oldroyd} or \cite{Bird}. We refrain from introducing these complex equations in detail here. Instead, we shall present here only an illustrational example involving a nonlinear Poisson equation with Dirichlet boundary conditions. This example nicely demonstrates how \eqref{ide0A} can be applied to improve information about the solution. 

\subsection{Application of (1.5)} Suppose that $\Omega\subset\R^3$ is an open set with a Lipschitz boundary. Then, let us consider the boundary value problem
\begin{align}\label{syst1}
-\sum_{i=1}^3\D_i\big(\sqrt{|\D A|_{3\x d\x d}}\,\D_iA\big)&=F\quad\text{in}\quad\Omega,\\
A&=0\quad\,\text{on}\quad \partial\Omega,\label{syst2}
\end{align}
for an unknown function $A$ and with the data satisfying $F\in L^5(\Omega;\Sym)$ (i.e., $|F|_{d\x d}^5$ is Lebesgue-integrable in $\Omega$). Then, we claim that every (distributional) positive definite solution of \eqref{syst1} and \eqref{syst2} must actually satisfy
\begin{equation}\label{prop}
\Big(\int_{\Omega}|A|_{d\x d}^{45}\Big)^{\frac16}+\int_{\Omega}|\D A^3|_{3\x d\x d}^{\frac52}\leq C\int_{\Omega}|F|^5_{d\x d}<\infty
\end{equation}
with some $C>0$ depending only on $\Omega$. This can be seen by taking the inner product of both sides of \eqref{syst1} with $A^6$, integrating the result over $\Omega$, integrating by parts and using \eqref{syst2}, leading to
\begin{equation}\label{eqa}
\int_{\Omega}\sqrt{|\D A|_{3\x d\x d}}\langle\D A,\D A^6\rangle_{3\x d\x d}=\int_{\Omega}\langle F,A^6\rangle_{d\x d}.
\end{equation}
To get any useful information out of this, one would like to proceed as in the scalar case: estimate the integrand on the left from below by a simpler expression of the type $|\D A^{\lambda}|_{3\x d\x d}^r$. In the matrix case, this seems not so easy. Nevertheless, a straightforward application of Theorem~\ref{Todh} with $p=\frac14$, $\alpha=\beta=1$ and $q=1$, $\gamma=1$, $\delta=6$, gives
\begin{equation}\label{po}
\sqrt{|\D A|_{3\x d\x d}}\langle\D A,\D A^6\rangle_{3\x d\x d}\geq\frac{2}{\sqrt{27}}|\D A^3|_{3\x d\x d}^{\frac52}.
\end{equation}
Then, we use the Young inequality and the Cauchy-Schwarz inequality to estimate
\begin{equation}\label{intest}
\int_{\Omega}\langle F,A^6\rangle_{d\x d}\leq\varepsilon\int_{\Omega}|A^6|_{d\x d}^{\frac54}+C(\varepsilon)\int_{\Omega}|F|_{d\x d}^5\leq\varepsilon\int_{\Omega}|A^3|_{d\x d}^{\frac52}+C(\varepsilon)\int_{\Omega}|F|_{d\x d}^5.
\end{equation}
If we use this inequality with $\varepsilon>0$ sufficiently small together with \eqref{po} in \eqref{eqa} and apply the Poincar\'e inequality, we obtain
\begin{equation}\label{poinc}
\int_{\Omega}|\D A^3|_{3\x d\x d}^{\frac52}\leq C\int_{\Omega}|F|^5_{d\x d}.
\end{equation}
Then, using the Sobolev embedding $W^{1,\frac52}(\Omega;\Sym)\hookrightarrow L^{15}(\Omega;\Sym)$ and also the inequality
\begin{equation}\label{str2}
|A|^{45}_{d\x d}\leq3^{15}|A^3|^{15}_{d\x d}
\end{equation}
(explained below), we arrive at \eqref{prop}. 

We would like to point out that although one could test also by the functions of the type $\langle A,I\rangle^{\lambda}_{d\x d}I$ (where the result is easier to manipulate), this can never yield the optimal gradient estimate \eqref{poinc}.

\subsection{Matrix power and matrix norm ``commute''}

Note that in the example above, inequality \eqref{str2} was also quite important (besides \eqref{po}). While \eqref{str2} may seem obvious after a while, this may be not be the case for similar inequalities with different natural, rational or even real exponents. However, due to the next proposition, we can manipulate the powers and norms of positive definite matrices analogously as in the scalar case, with certain multiplicative constants and up to one exception.

\begin{prop}\label{Tests}
	Let $A\in\PD$. Then the following estimates hold:
	\begin{align}
	|A|_{d\x d}&\leq \langle A,I\rangle_{d\x d}\leq\sqrt{d}|A|_{d\x d};\label{I7}\\
	\min\{1,d^{\frac{1-\alpha}2}\}|A|_{d\x d}^{\alpha}&\leq|A^{\alpha}|_{d\x d}\leq\max\{1,d^{\frac{1-\alpha}2}\}|A|_{d\x d}^{\alpha}\;\;\text{for any}\;\;\alpha\geq0;\label{I8}\\
	\min\{d^{\frac12},d^{\frac{-\alpha}2}\}|A|_{d\x d}^{\alpha}&\leq|A^{\alpha}|_{d\x d}\hskip3.7cm\text{for any}\;\;\alpha\leq 0.\label{I9}
	\end{align}
\end{prop}
\begin{proof}
	By the Cauchy-Schwarz inequality (in $\R^d$ and then in $\R^{d\x d}$) and Young's inequality, we get
	\begin{equation*}
	|A|_{d\x d}=\big|A^{\frac12}A^{\frac12}\big|_{d\x d}\leq\big|A^{\frac12}\big|_{d\x d}^2=\langle A,I\rangle_{d\x d}\leq|A|_{d\x d}|I|_{d\x d}=\sqrt{d}|A|_{d\x d},
	\end{equation*}
	which proves \eqref{I7}.
	
	Next, for $\alpha\in[0,\infty)$, we denote $\sigma(\alpha)=\sum_{i=1}^d\lambda_i^{2\alpha}$, where $\lambda_i=D_{ii}$ and $D$ is defined in \eqref{rozkl}. If we use concavity of the~power function $x\mapsto x^{\alpha}$ for $\alpha\in[0,1]$ twice (first time in the~form $\varepsilon x^{\alpha}\leq (\varepsilon x)^{\alpha}$, $\varepsilon\in(0,1)$), we get the~inequality
	\begin{equation*}
	\sigma(1)^{\alpha}\!=\sum_{i=1}^d\frac{\lambda_i^2}{\sigma(1)}\sigma(1)^{\alpha}\!\leq\sum_{i=1}^d\lambda_i^{2\alpha}\!
	=\sigma(\alpha)=d\sum_{i=1}^d\frac{\lambda_i^{2\alpha}}{d}\leq d\bigg(\sum_{i=1}^d\frac{\lambda_i^2}d\bigg)^{\alpha}=d^{1-\alpha}\sigma(1)^{\alpha}.
	\end{equation*}
	Thus, since $\sigma(\alpha)^{\frac12}=|D^{\alpha}|_{d\x d}=|A^{\alpha}|_{d\x d}$, we obtain
	\begin{equation}\label{conc}
	|A|_{d\x d}^{\alpha}=\sigma(1)^{\frac{\alpha}2}\leq \sigma(\alpha)^{\frac12}=|A^{\alpha}|_{d\x d}=\sigma(\alpha)^{\frac12}\leq d^{\frac{1-\alpha}2}\sigma(1)^{\frac{\alpha}2}=d^{\frac{1-\alpha}2}|A|_{d\x d}^{\alpha}.
	\end{equation}
	Analogously, for $\alpha\in[1,\infty)$, the convexity of $x\mapsto x^{\alpha}$ leads to
	\begin{equation}\label{conv}
	d^{\frac{1-\alpha}2}|A|_{d\x d}^{\alpha}\leq|A^{\alpha}|_{d\x d}\leq |A|_{d\x d}^{\alpha},
	\end{equation}
	which finishes the proof of \eqref{I8}. 
	
	To prove \eqref{I9}, note first that
	\begin{equation*}
	\sqrt{d}=|I|_{d\x d}=|BB^{-1}|_{d\x d}\leq|B|_{d\x d}|B^{-1}|_{d\x d}\quad\text{for any}\quad B\in\PD.
	\end{equation*}
	Hence, on choosing $B=A^{\alpha}$ and using the second inequality in \eqref{I8}, we get
	\begin{equation*}\big|A^{\alpha}\big|_{d\x d}\geq\sqrt{d}\big|A^{-\alpha}\big|_{d\x d}^{-1}\geq \sqrt{d}\max\{1,d^{\frac{1+\alpha}2}\}^{-1}|A|_{d\x d}^{\alpha}=\min\{d^{\frac12},d^{-\frac{\alpha}2}\}|A|_{d\x d}^{\alpha}
	\end{equation*}
	and the proof of the theorem is finished.
\end{proof}

The missing upper bound in \eqref{I9} can not hold as can be seen by considering, for example, the case $\alpha=-1$ and matrices of the form
\begin{equation*}
A_0=\left(\begin{matrix}
k&0\\0&k^{-1}
\end{matrix}\right),\quad k\in\N.
\end{equation*}

%Identity \eqref{gengen}, though not used, would be needed for time-dependent systems.

\section{Optimality, (counter-)examples and the reverse inequality}\label{SecOpt}

In this section, using only simple arguments and examples, we argue that the assumptions and conclusions of Theorem~\ref{Todh} are optimal in several aspects. To this end, we will explicitly evaluate both sides of \eqref{ide0A} in the case $d=2$, $n=1$, $\alpha=1$, $\beta=3$, $p=1$, $q=0$, for appropriately chosen functions $A_i\in\Wi(\R;\R^{2\x 2}_+)$. This case seems ideal as it is particularly easy to evaluate in hand, while exhibiting fully the non-commutativity of $A$ and $\D A$. It should be intuitively clear that the examples below and their analogies would work also for the other choices of the parameters $d\geq 2$, $n$, $\alpha$, $\beta$, $\gamma$, $\delta$, $p$, $q$, however proving this rigorously would be too exhaustive. Thus, the examples and conclusions in this section should be perceived only as strong indications of optimality of Theorem~\ref{Todh} (and its converse), but nothing more.

\subsection{Why (1.5) is only an inequality?}\label{SubIne}

Theorem~\ref{Todh} implies that
\begin{equation}\label{inee}
\langle\D A,\D A^3\rangle_{2\x2}\geq\frac34\big|\D A^2\big|_{2\x2}^2
\end{equation}
and we do not hope to improve the factor $\frac34$ (for general $A$) since \eqref{inee} is always an equality if $d=1$. However, we may still ask why \eqref{inee} is only an inequality when $d>1$. To answer this, let us consider the function
\begin{equation*}
A_1(x)=\left(\begin{matrix}
\cosh x&1\\
1&2
\end{matrix}\right),\quad x\in\R.
\end{equation*}
As $\cosh{x}\geq1$ for every $x\in\R$, the matrix $A_1$ is positive definite in $\R$. Note also, that $A_1$ and $\D_1 A_1$ commute only if $x=0$. Then, denoting
\begin{equation}\label{rdef}
r_{A}(x)\coloneqq\frac{\langle\D A(x),\D A^3(x)\rangle_{2\x2}}{|\D A^2(x)|_{2\x2}^2},\quad x\in\R,
\end{equation}
and performing some elementary algebra, we discover that
\begin{equation*}
r_{A_1}(x)=\frac{(\sinh{x})(2\sinh{x} + 3\cosh^2{x} \sinh{x})}{\left|\left(\begin{matrix}
	2 \cosh{x}\sinh{x} & \sinh{x}\\
	\sinh{x} &0
	\end{matrix}\right)\right|_{2\x2}^2}=\frac34+\frac{1}{4+8\cosh^2{x}},
\end{equation*}
which is always strictly greater than $\frac34$. This shows that we can not expect \eqref{inee} to hold with equality, unlike in the scalar case, where $r$ always evaluates to $\frac34$, of course. We support this claim by another, this time only graphical example: see Figure~1 for the graph of the function $r_{A_2}$, where
\begin{equation}\label{osc}
A_2(x)=\left(\begin{matrix}
\cosh x&\tfrac15\sin(5x)\\
\tfrac15\sin(5x)&1
\end{matrix}\right),\quad x\in\R.
\end{equation}
There we can see nicely that $\frac34$ is indeed an optimal lower bound in \eqref{inee} (and that this remains true even if we restrict $x$ to a smaller domain).
\begin{figure}\label{figg}
	\centering
	\includegraphics[scale=0.4]{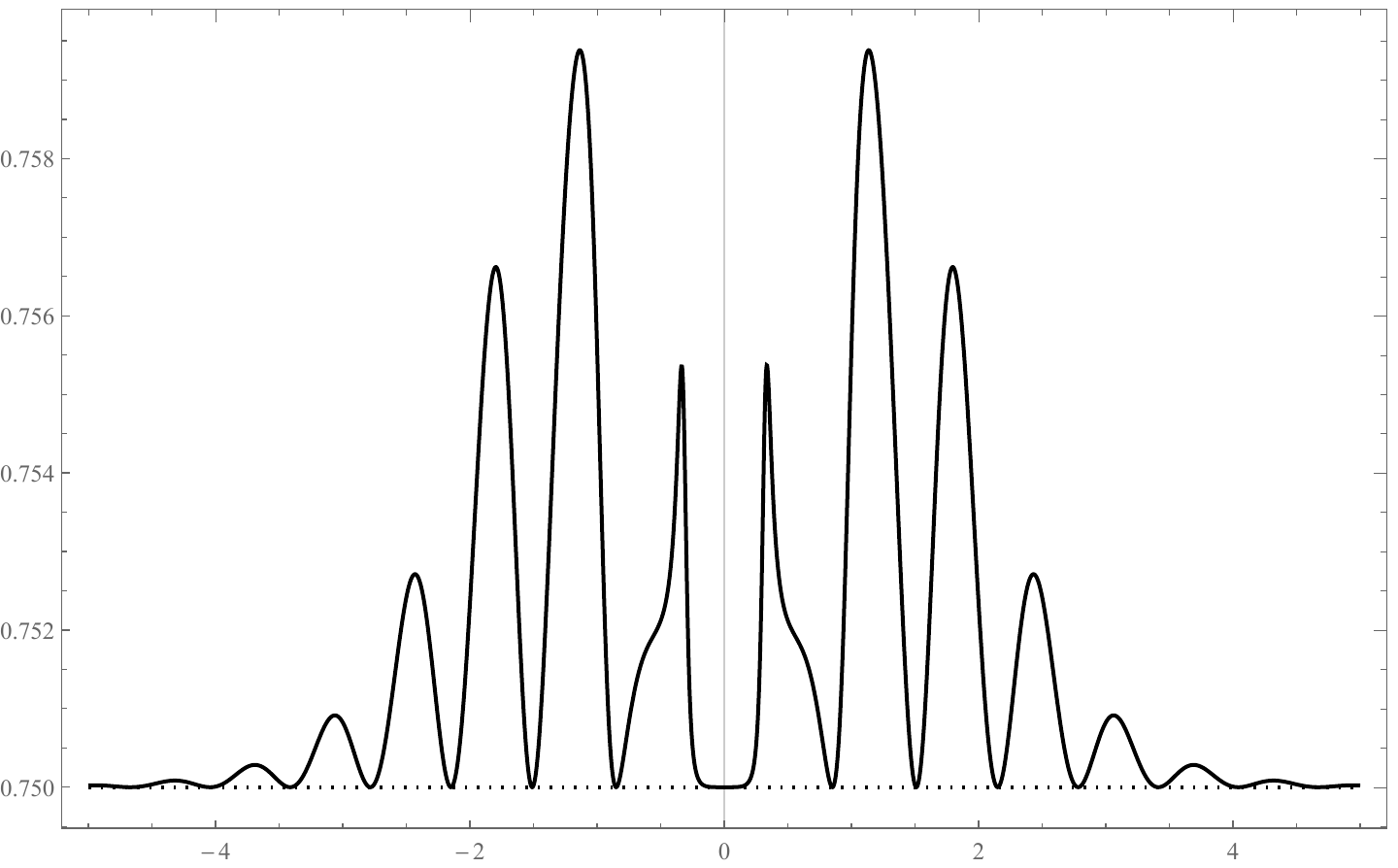}
	\caption{Graph of the function $r_{A_2}$.}
\end{figure}

\subsection{Matrix symmetry is important}\label{SubSym}

The notion of positive definiteness can be understood also in a more general sense, without the symmetry requirement (i.e.~merely that \eqref{pd} holds). However, in this class of functions, inequality \eqref{ide0A} is no longer true, in general. Indeed, for $k\in\N$, consider the non-symmetric matrix
\begin{equation*}
A_3(x)=\left(\begin{matrix}
\cosh{x}&k\\
0&k^2
\end{matrix}\right),\quad x\in\R.
\end{equation*}
Since, by Young's inequality, we have
\begin{align*}
\langle A_3(x)(a,b),(a,b)\rangle_2&=\langle(\cosh x\,a+kb,k^2b),(a,b)\rangle_2=\cosh x\,a^2+kab+k^2b^2\\
&\geq\big(\cosh x-\tfrac12\big)a^2+\tfrac12k^2b^2>0
\end{align*}
for all $a,b\in\R$, the matrix $A_3(x)$ is nonsymmetric positive definite for all $x\in\R$. Then, we compute
\begin{equation*}
r_{A_3}(x)=\frac{(\sinh x)(3\cosh^2 x\sinh x)}{\left|\left(\begin{matrix}
	2\cosh{x}\sinh{x}&k\sinh x\\
	0&0
	\end{matrix}\right)\right|_{2\x2}^2}=\frac{3}{4+\frac{k^2}{\cosh^2x}}<\frac34,
\end{equation*}
which contradicts \eqref{inee}. Moreover, as $k\to\infty$, we have $r_{A_3}(0)\to0$ and thus, there exists no positive multiplicative constant with which \eqref{inee} could hold. Hence, we see that the requirement on the symmetry of $A$ is crucial.

\subsection{Reverse inequality}\label{SecRev}

To give a complete picture about $\eqref{gen}_2$ and its generalization for symmetric positive definite functions, we investigate also the reverse inequality to \eqref{ide0A}. Using an elementary approach, we prove the following result, which however seems optimal.

\begin{theorem}\label{Trev}
	Let $\alpha,\beta\in\R$ and $A\in\Win$. Then
	\begin{equation}\label{ideee}
	\langle\D A^{\alpha},\D A^{\beta}\rangle_{n\x d\x d}\leq\big|\D A^{\frac{\alpha+\beta}2}\big|_{n\x d\x d}^2.
	\end{equation}
\end{theorem}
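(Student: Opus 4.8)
The plan is to reduce \eqref{ideee} to the same pointwise matrix inequality that underlies Theorem~\ref{Lemf}, but read in the \emph{opposite} direction. Fix a point of $V$ where $A$ is differentiable, write $B\dq A$ at that point and $L\dq\D_i\log A$ (the $i$-th component of the gradient of $\log A$), and recall from \eqref{def} that $\D_i A^{\la}=\la\,\D_i^{\la}A=\la\int_0^1 B^{\la(1-s)}LB^{\la s}\dd s$ for $\la\neq0$, while $\D_i A^{0}=0$. After a similarity transformation by $B^{1/2}$ (using \eqref{I1} in the form $\langle B_1 X B_2,X\rangle_{d\x d}=|B_1^{1/2}XB_2^{1/2}|^2_{d\x d}$ and the spectral calculus), one is left with a statement about the scalar integral kernels: for the spectral variables $\mu,\nu>0$ of $B$ and the exponents $\alpha,\beta$, the relevant quantity is $\frac{\alpha\beta}{(\alpha+\beta)^2/4}\cdot\frac{(\mu^{\alpha}-\nu^{\alpha})(\mu^{\beta}-\nu^{\beta})/(\log\mu-\log\nu)^2}{\big((\mu^{(\alpha+\beta)/2}-\nu^{(\alpha+\beta)/2})/(\log\mu-\log\nu)\big)^2}$, and \eqref{ideee} will follow once one shows this is $\le1$, i.e.\ the reverse of the divided-difference inequality hidden in \eqref{Odh}.

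Concretely, I would first dispose of the degenerate cases $\alpha=0$ or $\beta=0$ (then the left side is $0$ and there is nothing to prove) and $\alpha+\beta=0$ (then the right side is $|\D_i\log A|^2$ summed, and the left side is $-\alpha^2\langle\D_i^{\alpha}A,\D_i^{-\alpha}A\rangle$, which by the reverse of \eqref{Odh} applied with $x=\alpha-1$ is controlled — actually here the cleanest route may differ; I would handle it by a direct limiting argument from $\alpha+\beta\to0$). For the generic case, set $\gamma\dq\tf{\alpha+\beta}{2}$, $\delta\dq\tf{\alpha-\beta}{2}$, so $\alpha=\gamma+\delta$, $\beta=\gamma-\delta$, and factor $A^{\alpha}=A^{\gamma}A^{\delta}$. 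The goal becomes $\langle\D A^{\gamma+\delta},\D A^{\gamma-\delta}\rangle\le|\D A^{\gamma}|^2$. Writing $\D_iA^{\gamma\pm\delta}$ via \eqref{def} and transforming by $A^{\gamma}$ — more precisely by $A^{\gamma/2}$ on each side after using \eqref{I1} — reduces everything, diagonalising $A$, to the two-eigenvalue scalar inequality
\[
(\gamma+\delta)(\gamma-\delta)\,\frac{t^{\gamma+\delta}-1}{?}\cdots\le\gamma^2\Big(\frac{t^{\gamma}-1}{\log t}\Big)^2,
\]
which after cleaning up is exactly the assertion that $\phi(\delta)\dq\log\big|\D A^{\gamma+\delta}\big|$-type expression is \emph{concave} in $\delta$ near $\delta=0$ at the level of each $2\times2$ spectral block — the mirror image of the log-convexity in Theorem~\ref{TT}. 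So the core is: show the scalar function $s\mapsto (a^s-b^s)/s$ (equivalently $s\mapsto \sinh(s\theta)/s$ after substitution) has the property that $\frac{f(\gamma+\delta)f(\gamma-\delta)}{f(\gamma)^2}\le1$ for the relevant normalisation, i.e.\ $\log f$ is concave; this is elementary since $f(s)=\int_0^1 e^{s\cdot((1-\sigma)\log a+\sigma\log b)}\dd\sigma$-type Laplace-transform structure forces the needed sign, OR one simply differentiates twice.

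The main obstacle I anticipate is bookkeeping rather than conceptual: keeping the similarity transformations, the factor $\la$ from the definition \eqref{def}, and the cross-term structure of $\langle\cdot,\cdot\rangle_{n\x d\x d}$ (summation over $i=1,\dots,n$, which is harmless as everything is an $i$-by-$i$ sum of pointwise inequalities) all consistent, and in particular handling the \emph{off-diagonal} spectral blocks of $A$ — there \eqref{ideee} reduces not to a single-variable inequality but to a genuinely two-variable divided-difference estimate, and one must check the diagonal blocks ($\mu=\nu$) as a limit. A secondary subtlety is that, unlike \eqref{Odh} where the Jensen/convexity of $|\cdot|^2$ gives the endpoint cases for free, here the endpoint cases $|\alpha|$ or $|\beta|$ not comparable to $|\alpha+\beta|$ may force the constant to genuinely be $1$ only because of the symmetric midpoint choice; I would verify optimality by the same $A_1,A_2$-type examples used in Section~\ref{SubIne}. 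Once the scalar concavity lemma is in place, the rest is assembling \eqref{I1}, the spectral representation \eqref{pow}, and the integral formula \eqref{def}, exactly paralleling the proof of Theorem~\ref{TT} from Theorem~\ref{Lemf} but with every inequality reversed.
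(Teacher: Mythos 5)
Your approach is correct in its essential ideas but takes a genuinely different route from the paper. The paper proves \eqref{ideee} by expanding $\langle\D B^{2q},\D B^{2p}\rangle$ and $|\D B^{p+q}|^2$ via the product rule for \emph{integer} exponents, comparing the two resulting non-negative sums term by term through the combinatorial inequality $Q(2q-1,2p-1,s)\le Q(p+q-1,p+q-1,s)$, substituting $B=E^{1/(2q)}$, and then extending to real exponents by density of the rationals and continuous dependence of $\D A^{\la}$ on $\la$; the case $\alpha\beta\le0$ is dispatched separately by observing that the left side is then non-positive. You instead propose to diagonalise $A=QDQ^T$ at a fixed point, express the matrix elements of $\D_iA^{\alpha}$ in the eigenbasis via \eqref{def} as $L_{kl}\,g_{\alpha}(\mu_k,\mu_l)$ with $g_{\alpha}(\mu,\nu)=(\mu^{\alpha}-\nu^{\alpha})/(\log\mu-\log\nu)$, $g_{\alpha}(\mu,\mu)=\alpha\mu^{\alpha}$ and $L=Q^T(\D_i\log A)Q$, so that \eqref{ideee} becomes the term-by-term comparison of $\sum_{i,k,l}L_{kl}^2\,g_{\alpha}(\mu_k,\mu_l)g_{\beta}(\mu_k,\mu_l)$ with $\sum_{i,k,l}L_{kl}^2\,g_{\gamma}(\mu_k,\mu_l)^2$, $\gamma=\tf{\alpha+\beta}{2}$. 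This scalar reduction does go through: after factoring $g_{\alpha}(\mu,\nu)=\nu^{\alpha}(e^{\alpha\theta}-1)/\theta$ with $\theta=\log(\mu/\nu)$, the required inequality $g_{\alpha}g_{\beta}\le g_{\gamma}^2$ becomes $(u-1)(v-1)\le(\sqrt{uv}-1)^2$ with $u=e^{\alpha\theta}$, $v=e^{\beta\theta}$, which rearranges to $2\sqrt{uv}\le u+v$, i.e.\ AM--GM; the coincident-eigenvalue block $\mu=\nu$ gives $4\alpha\beta\le(\alpha+\beta)^2$, again trivial. In particular, the cases $\alpha\beta\le0$ and $\alpha+\beta=0$ about which you hedge are handled uniformly by the same scalar inequality, so no separate limiting argument is needed. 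Your sketch is unnecessarily tentative about the similarity manipulation (the $A^{\gamma/2}$-conjugation you describe is not actually required: one simply works in the eigenbasis of $A$ at the given point, using orthogonal invariance of the Frobenius inner product and the integral representation \eqref{def} evaluated at that point — no differentiation of $Q$ ever enters), but the core reduction is sound. What each route buys: the paper's argument is purely algebraic, never touches the spectral decomposition or divided-difference formulas, and reuses the same power-expansion technique as the rest of the section, at the cost of a density argument to escape integer exponents; your route identifies \eqref{ideee} as a pointwise scalar log-concavity statement in the exponent — the exact mirror of the log-convexity of $\D^{\la}A$ in Theorem~\ref{TT} — and collapses it to AM--GM, which makes the mechanism transparent, treats all exponents at once, and makes the equality case ($\alpha=\beta$ or $[A,\D A]=0$) immediate.
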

\begin{proof}
	For any $p,q\in\N$, $B\in\Win$, we use the product rule, \eqref{I2} and \eqref{I1} to write
	\begin{align}
	\big\langle\D B^{2q},\,&\D B^{2p}\big\rangle_{n\x d\x d}=\bigg\langle\sum_{i=0}^{2q-1}B^i(\D B)B^{2q-1-i},\sum_{j=0}^{2p-1}B^j(\D B)B^{2p-1-j}\bigg\rangle_{n\x d\x d}\nonumber\\
	&=\sum_{i=0}^{2q-1}\sum_{j=0}^{2p-1}\big|B^{\frac{i+j}2}(\D B)B^{p+q-1-\frac{i+j}2}\big|^2_{n\x d\x d}\nonumber\\
	&=\sum_{s=0}^{2p+2q-2}\!\!\!\!Q(2q-1,2p-1,s)\big|B^{\frac{s}2}(\D B)B^{p+q-1-\frac{s}2}\big|^2_{n\x d\x d},\label{nec0}
	\end{align}
	where
	\begin{equation*}
	Q(b,a,s)=\min\{b,s\}+\min\{a,s\}-s+1
	\end{equation*}
	is the number of decompositions of the form $v+w=s$ with $v\in\{0,1,\ldots,b\}$ and $w\in\{0,1,\ldots,a\}$. Proceeding completely analogously, we find that
	\begin{equation}\label{nec}
	\big|\D B^{p+q}\big|_{n\x d\x d}^2=\sum_{s=0}^{2p+2q-2}\!\!\!\!Q(p+q-1,p+q-1)\big| B^{\frac s2}(\D B)B^{p+q-1-\frac{s}2}\big|_{n\x d\x d}^2.
	\end{equation}
	Hence, using the simple inequality 
	\begin{equation*}
	Q(2q-1,2p-1,s)\leq Q(p+q-1,p+q-1,s),
	\end{equation*}
	which can be easily verified case by case, we obtain
	\begin{equation*}
	\big\langle\D B^{2q},\D B^{2p}\big\rangle_{n\x d\x d}\leq\big|\D B^{p+q}\big|_{n\x d\x d}^2.
	\end{equation*}
	If we let $E\in\Win$ and choose $B=E^{\frac1{2q}}\in\Win$ (using Lemma~\ref{Lcone} below), this leads to
	\begin{equation*}
	\big\langle\D E,\D E^{\frac pq}\big\rangle_{n\x d\x d}\leq\big|\D E^{(1+\frac pq)/2}\big|_{n\x d\x d}^2,\quad p,q\in\N.
	\end{equation*}
	It can be deduced from \eqref{def} and from \eqref{derd} below that there is a smooth dependence of $\D A^{\lambda}$ on $\lambda>0$, hence
	\begin{equation}\label{eqqE}
	\big\langle\D E, \D E^{\gamma}\big\rangle_{n\x d\x d}\leq\big|\D E^{\frac{1+\gamma}2}\big|_{n\x d\x d}^2,\quad \gamma>0,
	\end{equation}
	by the density of rational numbers in $\R$. Finally, for any $\alpha,\beta\in\R$ such that $\alpha\beta>0$ we choose $\gamma\coloneqq\frac{\alpha}{\beta}>0$ and $E\coloneqq A^{\beta}$ in \eqref{eqqE} to get \eqref{ideee}.
	
	The remaining case $\alpha\beta\leq0$ is trivial since the left hand side of \eqref{ideee} becomes non-positive. Indeed, this can be easily seen if we use \eqref{def}, \eqref{I2} and \eqref{I1}.
\end{proof}

We remark that the same method (i.e.\ expanding the powers as in \eqref{nec0}) could be also used to prove \eqref{ide0A}, however, with a sub-optimal multiplicative constant.

Let us consider the function
\begin{equation}\label{A4}
A_4(x)=\left(\begin{matrix}
1&\sin x\\
\sin x&m
\end{matrix}\right),\quad x\in[-1,1].
\end{equation}
where $m>2$. The matrix $A_4(x)$ is obviously positive definite for all $x\in[-1,1]$ and, recalling the definition of $r_A$ in \eqref{rdef}, we compute that
\begin{align*}
r_{A_4}(x)&=\frac{2\cos^2x(m^2+m+1+3\sin^2x)}{\left|\left(\begin{matrix}
	2\sin x\cos x&(m+1)\cos x\\
	(m+1)\cos x&2\sin x\cos x
	\end{matrix}\right)\right|_{2\x2}}=\frac{m^2+m+1+3\sin^2x}{m^2+2m+1+4\sin^2x},
\end{align*}
hence $r_{A_4}(0)\to1$ as $m\to\infty$. This example indicates that the multiplicative constant in \eqref{ideee} can not be improved, in general. 

Inequality \eqref{ideee} is obviously only a partial converse to \eqref{ide0A} since it misses ``half'' of the left hand side (i.e.\ $q=0$). This omission is necessary as, e.g., the inequality
\begin{equation}\label{zkus}
|\D A^3|_{2\x2}|\D A|_{2\x2}\leq C\big|\D A^2\big|_{2\x2}^2
\end{equation}
with some $C>0$ can not hold in general. To see this, we choose
\begin{equation*}
A_5(x)=\left(\begin{matrix}
	2+\cos x&\sin x\\
	\sin x&m,
\end{matrix}\right),\quad x\in\R,
\end{equation*}
which is a symmetric positive definite matrix for any $m>1$, since
\begin{equation*}
\langle A_5(x)(a,b),(a,b)\rangle_{2}=a^2(2+\cos x)+2ab\sin x+b^2m\geq a^2-2ab+b^2 m>0
\end{equation*}
for all $a,b,x\in\R$. Then, we compute
\begin{equation*}
\frac{|\D A_5(\tfrac{\pi}2)|_{2\x2}|\D A_5^3(\tfrac{\pi}2)|_{2\x2}}{|\D A_5^2(\tfrac{\pi}2)|_{2\x2}^2}=\frac{\sqrt{2m^2+16m+229}}{18},
\end{equation*}
which diverges as $m\to\infty$, violating \eqref{zkus} for all $C>0$.

Our final remark about Theorem~\ref{Trev} concerns the case $\alpha\beta<0$. We may ask if Theorem~\ref{Trev} would still hold in that case if \eqref{ideee} was replaced by an inequality
\begin{equation}\label{fg}
\langle\D^{\alpha} A,\D^{\beta} A\rangle_{n\x d\x d}\leq C \big|\D^{\frac{\alpha+\beta}2} A\big|_{n\x d\x d}^2
\end{equation}
for some $C>0$, where the left hand side now becomes positive. The following example shows that the answer is generally negative. We set $\alpha=1$, $\beta=-1$, $m>2$ and, recalling \eqref{A4}, we evaluate
\begin{equation*}
\frac{\langle\D^1 A_4,\D^{-1}A_4\rangle_{2\x2}}{|\D^0A_4|_{2\x2}^2}=\frac{-\langle\D A_4,\D A_4^{-1}\rangle_{2\x2}}{|\D\log A_4|_{2\x2}^2}=\frac{(m-1)^2}{m(\log m)^2},
\end{equation*}
which diverges as $m\to\infty$, showing that \eqref{fg} can not hold, regardless of how large $C>0$ is. 

All the examples above indicate that Theorem~\ref{Todh} and Theorem~\ref{Trev} may not be improved in any obvious way.

\section{Proofs of the main results}\label{SecProof}

We start by proving Lemma~\ref{Lemf}, which is the cornerstone of our estimates. Note that its conclusion is trivial if the matrices $B$ and $X$ commute (as in the case $d=1$).

\begin{proof}[Proof of Lemma~\ref{Lemf}]
	Let us define the function
	\begin{equation}\label{f}
		f(x)=\Big\langle\int_0^1B^{(1+x)s}XB^{-(1+x)s}\dd{s},\int_0^1B^{(1-x)s}XB^{-(1-x)s}\dd{s}\Big\rangle_{\!\!d\x d},\quad x\in\R.
	\end{equation}	
	Using the formula (which is standard in the ODE theory)
	\begin{equation*}
	\frac{\dd{}}{\dd{x}}\exp(xY)=Y\exp(xY)=\exp(xY) Y,\quad Y\in\R^{d\x d},
	\end{equation*}
	we find, for any $a,b\in\R$ that
	\begin{equation}\label{derd}
	\frac{\dd{}}{\dd{x}} A^{a+bx}=\frac{\dd{}}{\dd{x}}\exp\big((a+bx)\log{A}\big)=b A^{a+bx}\log A=b\log A\,A^{a+bx},
	\end{equation}
	where $A\in\PD$. From this we can deduce that $f$ is a smooth function in $\R$. Moreover, due to the commutativity of the inner product appearing in \eqref{f}, the function $f$ is even (and hence $f'(0)=0$). That $x=0$ is a point of global minimum of $f$ then follows from convexity of $f$, which we now prove by showing that $f''\geq0$ in $\R$.
	
	Using \eqref{I2} and the symmetry of $B$, we can write
	\begin{equation*}
	f(x)=\int_0^1\int_0^1\langle B^{s+t+(s-t)x}XB^{-s-t-(s-t)x},X\rangle_{d\x d}\dd{s}\dd{t}.
	\end{equation*}
	Thus, setting $x_{st}\coloneqq s+t+(s-t)x$, $L\coloneqq\log B$ and using \eqref{derd}, \eqref{I2}, we get
	\begin{align*}
	f'(x)&=\int_0^1\int_0^1(s-t)\langle LB^{x_{st}}XB^{-x_{st}}- B^{x_{st}}XB^{-x_{st}}L,X\rangle_{d\x d}\dd{s}\dd{t}\nonumber\\
	&=\int_0^1\int_0^1(s-t)\langle B^{x_{st}}XB^{-x_{st}},LX-XL\rangle_{d\x d}\dd{s}\dd{t}.
	\end{align*}
	Furthermore, relying on \eqref{I2} and \eqref{I1}, we find that
	\begin{align*}
	f''(x)&=\int_0^1\int_0^1(s-t)^2\langle B^{x_{st}}XB^{-x_{st}},L^2X-LXL\rangle_{d\x d}\dd{s}\dd{t}-\int_0^1\int_0^1(s-t)^2\langle B^{x_{st}}XB^{-x_{st}},LXL-XL^2\rangle_{d\x d}\dd{s}\dd{t}\\
	&=\int_0^1\int_0^1(s-t)^2\Big(\big|B^{\frac12x_{st}}LXB^{-\frac12x_{st}}\big|_{d\x d}^2-2\big\langle B^{\frac12x_{st}}LXB^{-\frac12x_{st}},B^{\frac12x_{st}}XLB^{-\frac12x_{st}}\big\rangle_{d\x d}+\big|B^{\frac12x_{st}}XLB^{-\frac12x_{st}}\big|_{d\x d}^2\Big)\dd{s}\dd{t}\\
	&=\int_0^1\int_0^1\big|(s-t)B^{\frac12x_{st}}(LX-XL)B^{-\frac12x_{st}}\big|^2_{d\x d}\dd{s}\dd{t}
	\end{align*}
	is non-negative for all $x\in\R$, which proves the convexity of $f$ and, consequently, that $f(x)\geq f(0)$ for all $x\in\R$. Rewriting this using \eqref{f} and \eqref{P}, we arrive at \eqref{Odh}.
\end{proof}

%We note that the convexity of the function $f$ which was just proved is an extra information (related to the special choice \eqref{P}), which may not be necessary for the inequality \eqref{Odh} to hold.
In the next proof, we apply Lemma~\ref{Lemf} in its explicit form
\begin{equation}\label{odh}
	\Big\langle\int_0^1\!\!B^{(1+x)s}XB^{-(1+x)s}\dd{s},\int_0^1\!\!B^{(1-x)s}XB^{-(1-x)s}\dd{s}\Big\rangle_{d\x d}\geq\Big|\int_0^1\!\!B^{s}XB^{-s}\dd{s}\Big|_{d\x d}^2.
\end{equation}

\begin{proof}[Proof of Theorem~\ref{TT}]
	Let us first exclude the case $\alpha+\beta=0$. We set $\lambda\coloneqq-\frac{\alpha+\beta}2$, $x\coloneqq\frac{\alpha-\beta}{\alpha+\beta}$ and apply \eqref{def} (proved in Lemma~\ref{Lrepr} below) twice, properties of the matrix power, \eqref{I2} and inequality \eqref{odh} to get
	\begin{align*}
	\big\langle\D^{\alpha}A,\D^{\beta}A\big\rangle_{n\x d\x d}&=\Big\langle\int_0^1\!\!A^{\alpha(1-s)}(\D^0A)A^{\alpha s}\dd{s},\int_0^1\!\!A^{\beta(1-s)}(\D^0A)A^{\beta s}\dd{s}\Big\rangle_{\!\!n\x d\x d}\\
	&=\Big\langle\!\int_0^1\!\!A^{-\alpha s}\big(A^{\frac{\alpha+\beta}2}\D^0A\big)A^{\alpha s}\dd{s},\int_0^1\!\!A^{-\beta s}\big(A^{\frac{\alpha+\beta}2}\D^0A\big)A^{\beta s}\dd{s}\!\Big\rangle_{\!\!n\x d\x d}\\
	&=\Big\langle\int_0^1(A^{\lambda})^{(1+x)s}\big(A^{\frac{\alpha+\beta}2}\D^0A\big)(A^{\lambda})^{-(1+x)s}\dd{s},\int_0^1(A^{\lambda})^{(1-x)s}\big(A^{\frac{\alpha+\beta}2}\D^0A\big)(A^{\lambda})^{-(1-x)s}\dd{s}\Big\rangle_{\!\!n\x d\x d}\\
	&\geq\Big|\int_0^1(A^{\lambda})^{s}\big(A^{\frac{\alpha+\beta}2}\D^0A\big)(A^{\lambda})^{-s}\dd{s}\Big|_{n\x d\x d}^2\\
	&=\Big|\int_0^1A^{\frac{\alpha+\beta}2(1-s)}(\D^0A)A^{\frac{\alpha+\beta}2s}\dd{s}\Big|_{n\x d\x d}^2\\
	&=\big|\D^{\frac{\alpha+\beta}2}A\big|_{n\x d\x d}^2.
	\end{align*}
	
	In the case $\alpha+\beta=0$, we use $|X|_{d\x d}=|X^T|_{d\x d}$ and symmetry of $A$, $\D^0A$, then we apply the Cauchy-Schwarz inequality, \eqref{I2}, \eqref{I1} and get
	\begin{align*}
	\langle&\D^{\alpha}A,\D^{\beta}A\rangle_{n\x d\x d}\\
	&=\Big\langle\int_0^1A^{\alpha(1-s)}(\D^0A)A^{\alpha s}\dd{s},\int_0^1A^{\beta(1-s)}(\D^0A)A^{\beta s}\dd{s}\Big\rangle_{\!\!n\x d\x d}\\
	&=\int_0^1\!\!\int_0^1\big\langle A^{-\alpha s}(\D^0A)A^{\alpha s},A^{-\beta t}(\D^0A)A^{\beta t}\big\rangle_{n\x d\x d}\dd{s}\dd{t}\\
	&=\int_0^1\!\!\int_0^1\big|A^{-\frac12(\alpha s+\beta t)}(\D^0A)A^{\frac12(\alpha s+\beta t)}\big|^2_{n\x d\x d}\dd{s}\dd{t}\\
	&=\int_0^1\!\!\int_0^1\big|A^{-\frac12(\alpha s+\beta t)}(\D^0A)A^{\frac12(\alpha s+\beta t)}\big|_{n\x d\x d}\big|A^{\frac12(\alpha s+\beta t)}(\D^0A)A^{-\frac12(\alpha s+\beta t)}\big|_{n\x d\x d}\dd{s}\dd{t}\\
	&\geq\int_0^1\!\!\int_0^1\big\langle A^{-\frac12(\alpha s+\beta t)}(\D^0A)A^{\frac12(\alpha s+\beta t)}, A^{\frac12(\alpha s+\beta t)}(\D^0A)A^{-\frac12(\alpha s+\beta t)}\big\rangle_{n\x d\x d}\dd{s}\dd{t}\\
	&=|\D^0 A|_{n\x d\x d}^2=\big|\D^{\frac{\alpha+\beta}2}A\big|_{n\x d\x d}^2.
	\end{align*}
	Hence, the property \eqref{cox} follows and Theorem~\ref{TT} is proved.
	\end{proof}	
	
	Up to some auxiliary results, Theorem~\ref{Todh} is an easy consequence of Theorem~\ref{TT}.
	
	\begin{proof}[Proof of Theorem~\ref{Todh}]
		The identity \eqref{gengen} is a direct consequence of \eqref{def} (proved in Lemma~\ref{Lrepr} below) and of \eqref{I2} since
		\begin{align*}
		\langle\D^{\alpha}A, A^{\beta}\rangle_{d\x d}&=\Big\langle\int_0^1A^{\alpha(1-s)}(\D^0A)A^{\alpha s}\dd{s}, A^{\beta}\Big\rangle_{\!\!d\x d}\\
		&=\int_0^1\big\langle A^{\alpha(1-s)}(\D^0A)A^{\alpha s}, A^{\beta(1-s)+\beta s}\big\rangle_{d\x d}\dd{s}\\
		&=\int_0^1\big\langle A^{(\alpha+\beta)(1-s)}(\D^0A)A^{(\alpha+\beta)s},I\big\rangle_{d\x d}\dd{s}=\langle\D^{\alpha+\beta}A,I\rangle_{d\x d}.
		\end{align*}
		
		To prove \eqref{ide0A}, we apply \eqref{cox} of Theorem~\ref{TT} twice and then we use the logarithmic convexity of $\lambda\mapsto|\D^{\lambda}A|_{n\x d\x d}$, which follows from \eqref{cox} (see Lemma~\ref{Llogcon} below):
		\begin{align*}
		\langle\D^{\alpha}A,\D^{\beta}A\rangle_{n\x d\x d}^p\langle\D^{\gamma}A,\D^{\delta}A\rangle_{n\x d\x d}^q
		&\geq\big|\D^{\frac{\alpha+\beta}2}A\big|_{n\x d\x d}^{2p}\big|\D^{\frac{\gamma+\delta}2}A\big|^{2q}_{n\x d\x d}\\
		&=\Big(\big|\D^{\frac{\alpha+\beta}2}A\big|_{n\x d\x d}^{\frac{p}{p+q}}|\D^{\frac{\gamma+\delta}2}A|_{n\x d\x d}^{\frac{q}{p+q}}\Big)^{2p+2q}\\
		&\geq\big|\D^{\frac{\alpha p+\beta p+\gamma q+\delta q}{2p+2q}}A\big|_{n\x d\x d}^{2p+2q}.
		\end{align*}
	\end{proof}

We remark that by iterating the above argument, it is of course possible to include more terms of the same form in the product on the left hand side of \eqref{ide0A}.

Note also that the special case of \eqref{gengen} recovers the Jacobi's formula. Indeed, decomposition \eqref{rozkl} and properties of the matrix determinant, imply that $\langle\,\log A,I\rangle_{d\x d}=\log\det A$. 
This, together with \eqref{gengen} for $\alpha=1$, $\beta=-1$ gives
\begin{equation}\label{JAC}
\langle\D A,A^{-1}\rangle_{d\x d}=\D\log\det A,\quad A\in\Win.
\end{equation}

\section{Auxiliary results}

As we suggested above, the strengthened logarithmic convexity provided by Theorem~\ref{TT} yields the logarithmic convexity for functions of the form $\lambda\mapsto|X(\lambda)|_{n\x d\x d}$.

\begin{lemma}\label{Llogcon}
	Let $H$ be a real vector space with the scalar product $\langle\cdot,\cdot\rangle_H$ and the corresponding norm $|\cdot|_H\coloneqq\sqrt{\langle\cdot,\cdot\rangle_H}$. Let the function $X\colon\R\to H$ be such that $|X|_H$ is Lebesgue-measurable in $\R$ and
	\begin{equation}\label{cox2}
	\big|X(\tfrac{\alpha+\beta}2)\big|^2_H\leq\langle X(\alpha),X(\beta)\rangle_H\quad\text{for all}\quad \alpha,\beta\in\R.
	\end{equation}
	Then $|X|_H$ is logarithmically convex.
\end{lemma}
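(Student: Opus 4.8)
The plan is to show that $\phi \dq \log|X|_H$ is midpoint convex, and then to upgrade midpoint convexity to full convexity using the measurability hypothesis via the classical Sierpiński theorem. The midpoint step is where the scalar-product hypothesis \eqref{cox2} does the work, and it reduces to the Cauchy–Schwarz inequality in $H$.

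First I would observe that if $|X(\alpha)|_H = 0$ or $|X(\beta)|_H = 0$ for some $\alpha,\beta$, then \eqref{cox2} forces $|X(\tf{\alpha+\beta}2)|_H = 0$ as well; a short argument (iterating the midpoint relation on a dense set of ratios, or simply noting that the set where $|X|_H$ vanishes is then convex and dense-closed in an interval) shows that either $|X|_H$ vanishes identically on $\R$ — in which case $\log|X|_H \equiv -\infty$ is vacuously convex — or $|X|_H > 0$ everywhere. So assume $|X|_H(\la) > 0$ for all $\la$. Then for arbitrary $\alpha,\beta\in\R$, combining \eqref{cox2} with the Cauchy–Schwarz inequality $\langle X(\alpha),X(\beta)\rangle_H \leq |X(\alpha)|_H\,|X(\beta)|_H$ gives
\begin{equation*}
\big|X(\tf{\alpha+\beta}2)\big|_H^2 \leq |X(\alpha)|_H\,|X(\beta)|_H,
\end{equation*}
and taking logarithms yields $\phi(\tf{\alpha+\beta}2) \leq \tf12\phi(\alpha) + \tf12\phi(\beta)$, i.e. $\phi$ is midpoint (Jensen) convex on $\R$.

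Next I would upgrade this to genuine convexity. Since $|X|_H$ is Lebesgue-measurable by hypothesis and strictly positive, $\phi = \log|X|_H$ is Lebesgue-measurable on $\R$. A measurable midpoint-convex function on an interval is convex: this is the Sierpiński theorem (a measurable function satisfying Jensen's inequality at midpoints is automatically continuous, hence convex — see e.g.\ standard references on convexity, or deduce it from the fact that a midpoint-convex function that is bounded above on a set of positive measure is continuous). Therefore $\phi$ is convex on $\R$, which is exactly the statement that $|X|_H$ is logarithmically convex.

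The main obstacle, such as it is, is purely bookkeeping: handling the degenerate case where $|X|_H$ has a zero (so that $\log|X|_H$ takes the value $-\infty$) cleanly, and quoting the measurable-midpoint-convex-implies-convex result at the right level of generality. Neither is deep; the genuinely substantive content is the one-line passage from \eqref{cox2} to midpoint log-convexity via Cauchy–Schwarz. If one prefers to avoid invoking Sierpiński's theorem, an alternative is to note that \eqref{cox2} iterated gives $\phi$ at all dyadic rational combinations, and then measurability plus the dyadic bound forces local boundedness and hence continuity on $\R$ directly; either route closes the argument.
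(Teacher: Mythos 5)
Your proof is correct and follows the paper's route at its core: \eqref{cox2} combined with the Cauchy--Schwarz inequality gives midpoint log-convexity, and the Blumberg--Sierpi\'nski theorem (a measurable midpoint-convex function is convex) upgrades this to genuine convexity. The one point of divergence is how you treat the zero set of $|X|_H$, where $\log|X|_H$ would equal $-\infty$. You isolate a dichotomy: either $|X|_H$ vanishes identically or is strictly positive, and then take logarithms directly. The paper instead perturbs by $\eps>0$: it uses \eqref{cox2}, Cauchy--Schwarz, and Young's inequality to derive
\begin{equation*}
\big(\eps+\big|X(\tfrac{\alpha+\beta}{2})\big|_H\big)^2 \le \big(\eps+|X(\alpha)|_H\big)\big(\eps+|X(\beta)|_H\big),
\end{equation*}
runs the same midpoint/Sierpi\'nski argument on $\log(\eps+|X|_H)$, and sends $\eps\to0_+$ at the very end. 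Both approaches are sound. Yours is slightly cleaner in the degenerate case, and in fact your dichotomy needs no iteration or density argument: if $X(\alpha_0)=0$, then for any $\gamma\in\R$ take $\beta=2\gamma-\alpha_0$ in \eqref{cox2} to get $|X(\gamma)|_H^2\le\langle X(\alpha_0),X(\beta)\rangle_H=0$, so $X\equiv0$ in one step. The paper's $\eps$-perturbation buys uniformity (no case distinction at all) at the cost of the small Young-inequality computation and the final limit passage.
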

\begin{proof}
	Let $\varepsilon>0$. If we apply \eqref{cox2}, the Cauchy-Schwarz inequality and the Young inequality in that order, we arrive at
	\begin{align*}
	\big(\varepsilon+\big|X(\tfrac{\alpha+\beta}2)\big|_H\big)^2&\leq\big(\varepsilon+\sqrt{\langle X(\alpha),X(\beta)\rangle_H}\big)^2\leq\big(\varepsilon+|X(\alpha)|^{\frac12}_H|X(\beta)|_H^{\frac12}\big)^2\\
	&=\varepsilon^2+2\varepsilon|X(\alpha)|_H^{\frac12}|X(\beta)|_H^{\frac12}+|X(\alpha)|_H|X(\beta)|_H\\
	&\leq\varepsilon^2+\varepsilon|X(\alpha)|_H+\varepsilon|X(\beta)|_H+|X(\alpha)|_H|X(\beta)|_H\\
	&=\big(\varepsilon+|X(\alpha)|_H\big)\big(\varepsilon+|X(\beta)|_H\big).
	\end{align*}
	Then, taking the logarithm of both sides of this inequality, we obtain
	\begin{equation*}
	\log\big(\varepsilon+\big|X(\tfrac{\alpha+\beta}2)\big|_H\big)\leq\frac12\log\big(\varepsilon+|X(\alpha)|_H\big)+\frac12\log\big(\varepsilon+|X(\beta)|_H\big)
	\end{equation*}
	for all $\alpha,\beta\in\R$, which shows that the real function
	\begin{equation*}
	\ell_{\varepsilon}\colon\lambda\mapsto\log\big(\varepsilon+|X(\lambda)|_H\big)
	\end{equation*}
	is midpoint convex in $\R$. Since the function $\ell_{\varepsilon}$ is a composition of a smooth function with the measurable function $|X|_H$, it is itself measurable. Hence, midpoint convexity of $\ell_{\varepsilon}$ is equivalent to convexity of $\ell_{\varepsilon}$ by the Blumberg-Sierpi\'nski theorem. This gives
	\begin{equation*}
	\log\big(\varepsilon+\big|X\big((1-\lambda)\alpha+\lambda\beta\big)\big|_H\big)\leq(1-\lambda)\log\big(\varepsilon+|X(\alpha)|_H\big)+\lambda\log\big(\varepsilon+|X(\beta)|_H\big),
	\end{equation*}
	which is equivalent to
	\begin{equation*}\varepsilon+\big|X\big((1-\lambda)\alpha+\lambda\beta\big)\big|_H\leq\big(\varepsilon+|X(\alpha)|_H\big)^{1-\lambda}\big(\varepsilon+|X(\beta)|_H\big)^{\lambda}
	\end{equation*}
	and by taking the limit $\varepsilon\to0_+$, we get
	\begin{equation*}
	\big|X\big((1-\lambda)\alpha+\lambda\beta\big)\big|_H\leq|X(\alpha)|_H^{1-\lambda}|X(\beta)|_H^{\lambda}
	\end{equation*}
	for all $\alpha,\beta\in\R$ and $\lambda\in[0,1]$, which is the desired logarithmic convexity of $|X|_H$.
\end{proof}

We further remark that there are functions $X$, for which $|X|_H$ is logarithmically convex, but the property \eqref{cox2} does not hold, indicating that \eqref{cox2} is a rather strong notion of logarithmic convexity for functions of the form $\lambda\mapsto |X(\lambda)|_H$. Indeed, let us consider the function
\begin{equation*}
X(\lambda)=\left(\begin{matrix}
\sinh\lambda&1\\0&0
\end{matrix}\right),\quad\lambda\in\R.
\end{equation*}
Then
\begin{equation*}
|X(\lambda)|_{2\x2}=\sqrt{\sinh^2\lambda+1}=\cosh\lambda
\end{equation*}
is logarithmically convex in $\R$ since $(\log\cosh\lambda)''=(\cosh\lambda)^{-2}>0$, but
\begin{equation*}
\langle X(0),X(2)\rangle_{2\x2}=1<1+\sinh^21=|X(1)|_{2\x2}^2,
\end{equation*}
violating \eqref{cox2}.

To prove the following lemma, we use different representations of the basic matrix functions than those which were introduced by \eqref{pow}. These representations are much more useful from the analytic point of view.

\begin{lemma}\label{Lcone}
	Let $a,b>0$ and $\alpha\in\R$. If $A\in\Win$, then $aA+bB$, $A^{\alpha}\in\Win$ and $\log A\in W^{1,\infty}_{\rm loc}(V;\Sym)$. Furthermore, if $A\in C^1(V;\PD)$, then $aA+bB$, $A^{\alpha}\in C^1(V;\PD)$ and $\log A\in C^1(V;\Sym)$.
\end{lemma}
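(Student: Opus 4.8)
The plan is to exhibit analytically tractable integral (or power-series) representations of the three operations $A\mapsto aA+bB$, $A\mapsto A^{\alpha}$, $A\mapsto\log A$ and differentiate under the integral sign. The sum $aA+bB$ is trivial: it is linear, so it preserves both $\Wi$-regularity and $\mathcal C^1$-regularity, and it maps $\PD$ into $\PD$ because $\langle(aA+bB)v,v\rangle_d = a\langle Av,v\rangle_d+b\langle Bv,v\rangle_d>0$ for $v\neq0$. So the work is entirely in the power and the logarithm, and — as the statement makes clear — the point is to avoid the spectral definition \eqref{pow}, whose eigenvalue branches need not depend regularly on $x\in V$ when eigenvalues collide.

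First I would handle $\log A$. On a neighbourhood $K$ with $\overline K\subset V$ (resp.\ pointwise for the $\mathcal C^1$ case) the image $A(\overline K)$ is a compact subset of $\PD$, hence the spectrum is contained in some interval $[m,M]\subset(0,\infty)$. For such matrices one has the representation $\log A=\int_0^\infty\big((1+t)^{-1}I-(A+tI)^{-1}\big)\dd t$, valid because it reduces, by \eqref{pow}, to the scalar identity $\log\la=\int_0^\infty\big((1+t)^{-1}-(\la+t)^{-1}\big)\dd t$. The integrand is a smooth (in fact real-analytic) function of the matrix entries, since $B\mapsto(B+tI)^{-1}$ is real-analytic on the open set of matrices with spectrum avoiding $-t$; the convergence of the $t$-integral and of its $x$-derivative is uniform on $K$ thanks to the uniform spectral bounds, which lets us differentiate under the integral sign and also bound $\D_i\log A$ by $\int_0^\infty\|(A+tI)^{-1}(\D_iA)(A+tI)^{-1}\|\dd t\le |\D_iA|\int_0^\infty (m+t)^{-2}\dd t$, giving the $W^{1,\infty}_{\rm loc}$ bound; and $\log A$ is symmetric because $A$ and hence $(A+tI)^{-1}$ is. The $\mathcal C^1$ statement follows the same way with continuity of $\D A$ replacing the $L^\infty$ bound.

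For $A^{\alpha}$ I would write $A^{\alpha}=\exp(\alpha\log A)$, using \eqref{explog}, and compose: $\exp$ is entire on all of $\R^{d\x d}$, $\log A$ is already controlled by the previous step, and the composition of a locally Lipschitz (resp.\ $\mathcal C^1$) map with an everywhere-smooth map is again locally Lipschitz (resp.\ $\mathcal C^1$), with $\D_i A^{\alpha}=\int_0^1 A^{\alpha s}(\alpha\,\D_i\log A)A^{\alpha(1-s)}\dd s$ after using the Duhamel formula for $\tfrac{\dd}{\dd t}\exp(tY)$; the bound $|\D_iA^{\alpha}|\le |\alpha|\,\|A^{\alpha}\|\,|\D_i\log A|$ on $K$ gives $A^{\alpha}\in\Win$. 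Positive-definiteness and symmetry of $A^{\alpha}$ are immediate from $A^{\alpha}=(A^{\alpha/2})^2$ with $A^{\alpha/2}$ symmetric and invertible. (Alternatively one can give a direct contour/Dunford-integral representation $A^{\alpha}=\tfrac{1}{2\pi i}\oint_\Gamma z^{\alpha}(zI-A)^{-1}\dd z$ around a fixed contour enclosing $[m,M]$ and avoiding $(-\infty,0]$, which is perhaps cleaner and makes the regularity and the derivative formula transparent in one stroke; I'd use whichever the surrounding text has already set up.)

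The main obstacle — and really the only subtle point — is the one already flagged: establishing regularity \emph{without} differentiating eigenvalues or eigenvectors, since on the set where eigenvalue multiplicities change the spectral data $Q,D$ of \eqref{rozkl} are not even continuous as functions of $x$. The resolvent/exponential representations above sidestep this completely by keeping everything in terms of $(A+tI)^{-1}$ and $\exp$, which are globally smooth in the matrix argument; the price is the two convergence-of-improper-integral verifications, which are routine given the uniform spectral bounds on $\overline K$. One should also note in passing that these bounds are exactly where the hypothesis $A\in\Win$ (rather than merely $W^{1,\infty}$ on $V$) is used, to guarantee $A(\overline K)\Subset\PD$ for each such $K$.
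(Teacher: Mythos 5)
Your plan is correct and takes essentially the same route as the paper: avoid the spectral decomposition \eqref{pow} (whose eigenvector branches need not vary regularly across multiplicity changes), represent $\log A$ by a resolvent-type integral whose integrand is globally smooth in the matrix argument, and write $A^{\alpha}=\exp(\alpha\log A)$ using the entire map \eqref{serie}. The remaining differences are technical. The paper first establishes $A^{-1}\in\Win$ as a separate step (local boundedness via the smallest-eigenvalue map $\rho$, the identity $\D A^{-1}=-A^{-1}(\D A)A^{-1}$ obtained by mollifying $A$ and passing to the limit), and then uses the compact-interval representation $\log A=\int_0^1((1-s)I+sA)^{-1}(A-I)\dd s$, which is equivalent to your improper integral $\int_0^\infty\big((1+t)^{-1}I-(A+tI)^{-1}\big)\dd t$ under the substitution $s=t/(1+t)$; it transfers all derivative formulas to $W^{1,\infty}_{\rm loc}$ by mollification, whereas you transfer regularity by the chain rule for Lipschitz (resp.\ $\mathcal C^1$) compositions with the real-analytic resolvent and exponential, and verify convergence of the improper integral by the uniform spectral lower bound on $\overline K$. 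Both versions are valid; the compact-interval form used in the paper keeps everything proper and also extracts the explicit identity \eqref{dlog}, which is reused later in the proof of Lemma~\ref{Lrepr}. One small correction: the intermediate bound $|\D_iA^{\alpha}|\le|\alpha|\,\|A^{\alpha}\|\,|\D_i\log A|$ holds in the operator norm (where $\|A^{\alpha s}\|\,\|A^{\alpha(1-s)}\|=\|A^{\alpha}\|$ for $A\in\PD$), but in the Frobenius norm it should carry a dimensional constant; this does not affect the conclusion, since any locally uniform bound suffices.
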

\begin{proof}
	It is obvious that $aA+bB\in W^{1,\infty}_{\rm loc}(V;\R^{d\x d})$. Moreover, we have
	\begin{equation}\label{eve}
	\langle(aA+bB)v,v\rangle_d=a\langle Av,v\rangle_d+b\langle Bv,v\rangle_d>0\quad\text{for all}\quad 0\neq v\in\R^d
	\end{equation}
	in $V$, and thus $\Win$ is a convex cone. 
	
	Next, we shall prove that $A^{-1}\in\Win$. Note that the positive definiteness of $A$ holds \textit{everywhere} in $V$ since $A$ is continuous in $V$. Hence, we deduce that the eigenvalues of $A$ are all positive everywhere in $V$ and therefore, the matrix inverse $A^{-1}$ exists everywhere in $V$ and it is a positive definite matrix. Moreover, the function $A^{-1}\colon V\to \PD$ obtained hereby is locally bounded. To see this, let us define the function $\rho\colon\PD\to\R_+$ by
	\begin{equation*}
	\rho(B)\coloneqq\min\{\,\lambda:\det(\lambda I-B)=0\,\},\quad B\in\PD.
	\end{equation*}
	It is a well known fact that the spectrum of a matrix depends continuously on its entries (see, e.g., \cite[p.~539]{Horn1990}), and thus $\rho$ is continuous. From this and the continuity of $A$ we deduce that also the composition $\rho\circ A\colon V\to\R_+$ is continuous. Thus, the function $\rho\circ A$ attains its minimum $m_K>0$ on any compact subset $K$ of $V$. Hence, using \eqref{pow}, we can estimate
	\begin{equation*}
	|A^{-1}|_{d\x d}=|QD^{-1}Q^T|_{d\x d}=|D^{-1}|_{d\x d}\leq\frac{d}{\rho\circ A}\leq\frac{d}{m_K}\quad\text{in}\quad K,
	\end{equation*}
	which proves the local boundedness of $A^{-1}$. Hence, the product $A^{-1}(\D A)A^{-1}$ is well defined and locally bounded a.e.\ in $V$. Since, for any $B\in C^{1}(V;\PD)$, we can write
	\begin{equation*}
	\D B^{-1}=B^{-1}B(\D B^{-1})=B^{-1}\D(BB^{-1})-B^{-1}(\D B)B^{-1}=-B^{-1}(\D B)B^{-1},
	\end{equation*}
	we obtain also 
	\begin{equation*}
	\D A^{-1}=-A^{-1}(\D A)A^{-1}\quad \text{a.e.\ in}\quad V
	\end{equation*}
	and $A^{-1}\in\Win$	by a standard approximation argument.
	
	Next, we prove that $\log A\in\Wi(V;\Sym)$. It follows from the properties proved so far that $((1-s)I+sA)^{-1}\in\Win$ for any $s\in[0,1]$. Then, we invoke the well known integral representation of the matrix logarithm
	\begin{equation}\label{smo}
	\log A=\int_0^1((1-s)I+s A)^{-1}(A-I)\dd{s},\quad A\in\PD,
	\end{equation}
	which can be easily verified by using \eqref{rozkl} on the right hand side of \eqref{smo}, evaluating the integrals on the diagonal and finally applying \eqref{pow} (see \cite[p.~269]{Higham} or \cite[Exc.~2.3.9]{Faraut}, cf.~also \cite{Wouk}). Moreover, by applying the derivative to \eqref{smo} (more precisely, by writing \eqref{smo} for the mollification of $A$, applying the derivative and then taking the limit as above), one can deduce that
	\begin{equation}\label{dlog}
	\D\log A=\int_0^1((1-s)I+sA)^{-1}\D A((1-s)I+sA)^{-1}\dd{s}\quad\text{a.e.\ in}\quad V,
	\end{equation}
	cf.\ \cite[(11.10)]{Higham}, from which we readily see that $\log A\in W^{1,\infty}_{\rm loc}(V;\Sym)$.
	
	It remains to deal with the general matrix power $A^{\alpha}$. To this end, we recall that the function $\exp$ can be given by the everywhere convergent matrix power series
	\begin{equation}\label{serie}
	\exp X=\sum_{k=0}^{\infty}\frac1{k!}X^k,\quad X\in\R^{d\x d}.
	\end{equation}
	Then, it is standard to show that $\exp\colon\R^{d\x d}\to\R^{d\x d}$ is a smooth map (cf.~\cite[Sec.~2.1.]{Faraut}) and that it takes $\Sym$ into $\PD$. Hence, by the virtue of the formula
	\begin{equation}\label{loge}
	A^{\alpha}=\exp\log A^{\alpha}=\exp(\alpha \log A),
	\end{equation}
	which follows from \eqref{explog}, we finally conclude $A^{\alpha}\in\Win$.
	
	The proof of Lemma~\ref{Lcone} for $A\in C^1(V;\PD)$ is analogous and is thus omitted.
\end{proof}

Due to Lemma~\ref{Lcone}, the set $\Win$ provides a very convenient setting for our results. Moreover, this setting is advantageous in PDE applications since if a solution of some system is expected to be at least weakly differentiable, it can be always constructed (at least locally) as a limit of some approximating sequence, consisting of Lipschitz continuous functions (constructed, e.g., by~a convolution, by a semi-discretization, by the approximation lemma from \cite{Acerbi} etc.). Frequently, the solution inherits certain properties of the approximating sequence, in particular, inequalities are often preserved by a weak convergence. Then it is enough to apply our results to such approximations.

Our last result concerns the representation formula for $\D A^{\alpha}$ (or $\D^{\alpha}A$) that was stated already in \eqref{def} and used frequently thereafter. In a different context, this formula for $\alpha=1$ can be found in \cite[(11.9)]{Higham}.

\begin{lemma}\label{Lrepr}
	Let $A\in\Win$ and $\alpha\in\R$. Then, the identity 
	\begin{equation}\label{mocnina}
	\D^{\alpha}A=\int_0^1A^{\alpha(1-s)}(\D\log A)A^{\alpha s}\dd{s}
	\end{equation}
	holds almost everywhere in $V$.
\end{lemma}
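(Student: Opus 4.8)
The plan is to reduce~\eqref{mocnina} to a single formula for the (Fr\'echet) derivative of the matrix exponential and then transport that formula to the Sobolev setting by mollification. The case $\alpha=0$ is immediate: by the very definition~\eqref{def} one has $\D^0A=\D\log A$, while the integrand on the right-hand side of~\eqref{mocnina} equals $A^0(\D\log A)A^0=\D\log A$, so the identity holds trivially. Hence I may assume $\alpha\neq0$, where $\D^{\alpha}A=\alpha^{-1}\D A^{\alpha}$ and~\eqref{mocnina} is equivalent to
\begin{equation*}
\D A^{\alpha}=\int_0^1A^{\alpha(1-s)}\big(\D(\alpha\log A)\big)A^{\alpha s}\dd{s}.
\end{equation*}
Since $A^{\alpha}=\exp(\alpha\log A)$ by~\eqref{loge}, and since Lemma~\ref{Lcone}(i) guarantees $\log A\in\Wi(V;\Sym)$ and $A^{\alpha}\in\Win$, the displayed identity is precisely the chain rule applied to the composition of the smooth map $\exp$ with the locally Lipschitz map $\alpha\log A$, provided I know that the derivative of $\exp$ at a matrix $Y$ is the linear map $H\mapsto\int_0^1\exp((1-s)Y)H\exp(sY)\dd{s}$.

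So the conceptual core is the Duhamel-type expansion
\begin{equation*}
\exp(Y+H)=\exp(Y)+\int_0^1\exp((1-s)Y)H\exp(sY)\dd{s}+o(|H|_{d\x d}),\qquad Y,H\in\R^{d\x d}.
\end{equation*}
I would obtain it by the variation-of-parameters trick: fixing $Y,H$ and setting $\Psi(r)\dq\exp(-rY)\exp(r(Y+H))$ for $r\in[0,1]$, the ODE rule~\eqref{derd} gives $\Psi(0)=I$ and $\Psi'(r)=\exp(-rY)H\exp(r(Y+H))$, so that multiplying $\Psi(1)-\Psi(0)=\int_0^1\Psi'(r)\dd{r}$ by $\exp(Y)$ and using $\exp(r(Y+H))=\exp(rY)+O(|H|_{d\x d})$ uniformly in $r\in[0,1]$ (by continuity of $\exp$ from~\eqref{serie}) yields the expansion. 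An equivalent and perhaps cleaner route in the present notation is to expand both sides directly in the power series~\eqref{serie}: the product rule gives $\D(Y^k)=\sum_{j=0}^{k-1}Y^j(\D Y)Y^{k-1-j}$, while the Beta integral $\int_0^1(1-s)^as^b\dd{s}=\frac{a!\,b!}{(a+b+1)!}$ gives $\int_0^1\exp((1-s)Y)(\D Y)\exp(sY)\dd{s}=\sum_{a,b\geq0}\frac{1}{(a+b+1)!}Y^a(\D Y)Y^b$, and regrouping by $k=a+b+1$ matches the two series term by term. The interchanges of $\sum$, $\D$ and $\int_0^1$ are licit because the factorial series converges uniformly on bounded subsets of $\R^{d\x d}$.

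Finally, I would transfer the identity from the classical ($\mathcal{C}^1$) case to general $A\in\Win$ exactly as in the proof of Lemma~\ref{Lcone}: write it for the mollification $A_{\eps}=A*\eta_{\eps}$ (for which $\log A_{\eps}$ and $A_{\eps}^{\alpha}$ are classically differentiable and the chain rule above applies pointwise), and then let $\eps\to0_+$. Passing to the limit under the integral sign over $s\in[0,1]$ is harmless: on any compact $K\subset V$ the matrices $A,A^{-1}$, and hence $A^{\alpha s}=\exp(s\alpha\log A)$, are bounded uniformly in $s\in[0,1]$ (with a bound controlled by the quantity $m_K$ from Lemma~\ref{Lcone}), and $\D\log A\in L^{\infty}_{\rm loc}(V;\Sym)$ by~\eqref{dlog}, so the integrand of~\eqref{mocnina} is dominated by a locally bounded function independent of $s$. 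I expect this last step — the uniform-in-$s$ domination needed to commute differentiation with $\int_0^1$ and with the mollification limit — to be the only genuinely delicate point, but it is routine given the local bounds already collected in Lemma~\ref{Lcone}.
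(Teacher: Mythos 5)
Your argument follows the same route as the paper's: it reduces the claim to the Fr\'echet-derivative formula \eqref{EXP} for the matrix exponential, applied with $X=\alpha\log A$ via \eqref{loge}, and then transfers from the $\mathcal{C}^1$ case to general $A\in\Win$ by mollification using the local boundedness supplied by Lemma~\ref{Lcone}. The only material difference is that you sketch a self-contained derivation of \eqref{EXP} (both by variation of parameters and by the power-series/Beta-integral computation), whereas the paper simply cites the literature for that formula; the trivial $\alpha=0$ case you handle separately is also covered, implicitly, by the paper's argument since $A^{0}=I$.
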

\begin{proof}
	It is well known (see \cite[(2.1)]{Wilcox} and references therein, cf.~also \cite[(10.15)]{Higham}) that the formula
	\begin{equation}\label{EXP}
	\D\exp X=\int_0^1\exp((1-s)X)\D X\exp(s X)\dd{s}
	\end{equation}
	holds in the classical sense, i.e., for $X\in C^{1}(V;\Sym)$. Moreover, if $A\in C^1(V;\PD)$, Lemma~\ref{Llogcon} yields $\log A\in C^1(V;\Sym)$. Then \eqref{mocnina} follows if we choose $X=\log A^{\alpha}=\alpha\log A$ in \eqref{EXP}, using \eqref{loge}. In the general case $A\in\Win$, we can again approximate $A$ by its convolution $A_{\varepsilon}$, hereby obtaining
	\begin{equation}\label{mocninae}
	\int_V\langle\D^{\alpha}A_{\varepsilon},\Phi\rangle_{d\x d}=\int_V\Big\langle\int_0^1A_{\varepsilon}^{\alpha(1-s)}(\D\log A_{\varepsilon})A_{\varepsilon}^{\alpha s}\dd{s},\Phi\Big\rangle_{d\x d}
	\end{equation}
	for all $\Phi\in C^{\infty}(V;\R^{d\x d})$ compactly supported. Then, since  $\D^{\alpha}A$ and $\D\log A$ are well defined and locally bounded due to Lemma~\ref{Lcone}, it is standard to take the limit in \eqref{mocninae}, obtaining \eqref{mocnina}.
\end{proof}

\section{Concluding remarks}\label{SecLast}

We provided the most basic calculus for locally Lipschitz continuous functions whose codomain is the set of symmetric positive definite matrices. It was shown that, although we need to relieve from the equality sign in $\eqref{gen}_2$, our results are optimal in many aspects. We illustrated that our results apply directly in the theory of tensorial partial differential equations, also due to a~rather mild smoothness assumption $A\in\Win$. Nevertheless, we would like to remark that this assumption can be further relaxed if needed.

Focusing, e.g., on \eqref{ide0A} and replacing the space $\Win$ by $W^{1,r}_{\rm loc}(V;\PD)$ for certain $1\leq r<\infty$, we need to face two additional issues. First, we have to ensure that the left hand side of \eqref{ide0A} is well defined and locally integrable (so that it can be well approximated by smooth functions). When this happens depends crucially on the exponents $\alpha$, $\beta$, $\gamma$, $\delta$, but also on $n$ and $V$ (due to Sobolev embeddings) and the complete characterization would get too complicated. The second issue may occur in the case where some of the exponents $\alpha$, $\beta$, $\gamma$, $\delta$ are negative. Note that $A\in W^{1,r}_{\rm loc}(V;\PD)$ no longer implies continuity of $A$ if $r\leq n$, and $A^{-1}$ may then develop singularities inside $V$ even if $A$ is positive definite in $V$. Hence, in this situation, one has to introduce additional assumptions, such as $A^{-1}\in W^{1,q}_{\rm loc}(V;\PD)$ with appropriately chosen $q$. Then, the idea is to use \eqref{ide0A} for the approximation $(A+\varepsilon I)_{\varepsilon}$ and pass to the limit $\varepsilon\to0_+$. Another obvious remedy is assuming the uniform positive definiteness, i.e., that there exists $\lambda>0$, such that $\langle Av,v\rangle_d\geq\lambda|v|_d^2$ for all $v\in\R^d$. A detailed treatment of these modifications is omitted, as we believe that the setting provided by the space $\Win$ is sufficiently general.

It seems that the proof of Theorem~\ref{Todh} illuminates several interesting mathematical results of a more abstract nature. These results would be difficult to conjecture based only on their scalar version. For example, although the function $\lambda\mapsto\log|\frac1{\lambda}\D A^{\lambda}|_{n\x d\x d}$ is linear if $d=1$, there seems to be no obvious reason, why the same function should be convex if $d>1$ (as claimed in Theorem~\ref{TT}). Next, while it is easy to see that the scalar function $p$ fulfils \eqref{Odh} if and only if the even part of the function $\log p$ attains its global minimum at $0$, such a characterization becomes quite ambiguous in the tensorial case, although the form of the inequality \eqref{Odh} remains the same. Here it seems that the choice of the inner product on $\R^{d\x d}$ plays a prominent role and in our case, the Frobenius inner product is considered as it arises naturally in the PDE applications (cf.\ Section~\ref{SecMot}). Note that Lemma~\ref{Lemf} provides only one example of matrix function (although quite non-trivial) satisfying \eqref{Odh}, while again this example is of no value in the scalar case. It thus seems that there is plenty of room for further exploration.

\end{document}